\documentclass[12pt]{article}
\usepackage{amsthm,amsmath,amssymb,amsfonts}
\usepackage{graphicx}
\usepackage[colorlinks=true,citecolor=black,linkcolor=black,urlcolor=blue]{hyperref}

\sloppy

\theoremstyle{plain}
\newtheorem{theorem}{Theorem}[section]

\newtheorem{lemma}[theorem]{Lemma}
\newtheorem{corollary}[theorem]{Corollary}
\newtheorem{proposition}[theorem]{Proposition}

\theoremstyle{definition}
\newtheorem{definition}[theorem]{Definition}
\newtheorem{example}[theorem]{Example}

\theoremstyle{remark}

\title{\bf Between graphical zonotope and graph\,--\,associahedron}

\author{
Marko Pe\v{s}ovi\'c\footnote{The author is supported by Ministry of Science of Republic of Serbia, Project 174034.}\\
\small Faculty of Civil Engineering,\\[-0.8ex]
\small University of Belgrade\\[-0.8ex]
\small Serbia\\
\small\tt mpesovic@grf.bg.ac.rs
\and
Tanja Stojadinovi\'c\footnote{The author is supported by the Science Fund of the Republic of Serbia, Grant No. 7749891, Graphical Languages -- GWORDS.
}\\
\small Faculty of Mathematics\\[-0.8ex]
\small University of Belgrade\\[-0.8ex]
\small Serbia\\
\small\tt tanjas@matf.bg.ac.rs}


\date{
\small Mathematics Subject Classifications: 
05C22\;\; 16T05\;\; 52B11}

\begin{document}

\maketitle

\begin{abstract}
This manuscript introduces a finite collection of generalized
permutohedra associated to a simple graph. The first polytope of this
collection is the graphical zonotope of the graph and the last is the
graph--associahedron associated to it. We describe the weighted integer
points enumerators for polytopes in this collection as Hopf algebra
morphisms of combinatorial Hopf algebras of decorated graphs. In the last
section, we study some properties related to $\mathcal{H}$--polytopes.

\bigskip\noindent \textbf{Keywords}:
generalized permutohedron, quasisymmetric function, graph, decorated graph, combinatorial Hopf algebra, $f-$polynomial
\end{abstract}

\section{Introduction }

\medskip

In this paper we construct a finite collection of generalized permutohedra associated to a simple graph. This collection starts with the graphical zonotope and ends with the graph--associahedron. To each generalized permutohedron $Q$ there is associated quasisymmetric function $F(Q)$, introduced in~\cite{BJR}. It enumerates positive lattice points in the normal fan $\mathcal{N}_Q$. The weighted analogue $F_q(Q)$ of this enumerator, which takes into account the face structure of the normal fan $\mathcal{N}_Q$, is introduced and studied in~\cite{GPS}. Among its properties is that the $f-$polynomial $f(Q)$ can be obtained as the principal specialization of $F_q(Q)$. For different classes of generalized permutohedra the algebraic interpretation of these enumerators is given as universal morphism from appropriately defined combinatorial Hopf algebras to the combinatorial Hopf algebra of quasisymmetric functions $\mathcal{Q}Sym$.

\medskip

The cases of graphical zonotopes $Z_\Gamma$ and graph--associahedra $P_\Gamma$ are of special interest. The enumerator $F(Z_\Gamma)$ is known to be the Stanley chromatic symmetric function and the enumerator $F(P_\Gamma)$ is the chromatic quasisymmetric function introduced in~\cite{G}. Our construction produces a finite collection of weighted quasisymmetric functions between $F_q(Z_\Gamma)$ and $F_q(P_\Gamma)$. We show that each of these weighted quasisymmetric functions is actually derived from certain combinatorial Hopf algebra of decorated graphs.

\section{Graph polytopes}

\medskip

For the standard basis vectors $\{e_s\}_{1\leq s\leq n}$ in $\mathbb{R}^n$, let $\Delta_H:=\mathsf{conv}\{e_s\,:\,s\in H\}$ be the simplex determined by a subset $H\subseteq[n]$. The \emph{hypergraphic polytope} of a hypergraph $\mathsf{H}$ on $[n]$ is the Minkowski sum of simplices
$$Q_\mathsf{H}\,:=\,\sum_{H\in\mathsf{H}}\Delta_H.$$
As generalized permutohedron can be described as the Minkowski sum of dilated simplices (see~\cite{P}), we have that hypergraphic polytope is generalized permutohedron, i.e. a convex polytope whose normal fan $\mathcal{N}_{Q_\mathsf{H}}$ is refined by the reduced normal fan $\mathcal{N}_{Pe^{n-1}}$ of standard $(n-1)-$dimensional permutohedron $Pe^{n-1}$. The $(n-d)-$dimensional faces of $Pe^{n-1}$ are in one$-$to$-$one correspondence with set compositions $\mathcal{C}=C_1|C_2|\cdots|C_{d}$ of the set $[n]$ (see~\cite{P}, Proposition 2.6). By this correspondence and the correspondence between set compositions and flags of subsets we identify face $\mathcal{C}$ of $Pe^{n-1}$ with the flag $\mathcal{F}:\emptyset=F_0\subset F_1\cdots\subset F_{d}=[n]$, where $F_m=\cup_{i=0}^mC_i$ for $1\leq m\leq d.$\\

Next, for a flag $\mathcal{F}$ of subsets let $M_\mathcal{F}$ be the enumerator of positive integer points $\omega=(\omega_1,\omega_2,\ldots,\omega_n)\in\mathbb{Z}^n_+$ in the interior of the normal cone $\mathcal{N}_{Pe^{n-1}}(\mathcal{F})$ at the $(n-d)-$dimensional face $\mathcal{F}$,
$$M_\mathcal{F}\;\;:=\;\sum_{\omega\in\mathbb{Z}^n_+\cap\,\mathcal{N}^\circ_{Pe^{n-1}}(\mathcal{F})}x_{\omega_1}x_{\omega_2}\cdots x_{\omega_n}.$$
The enumerator $M_\mathcal{F}$ is a monomial quasisymmetric function depending only on the composition $\mathsf{type}(\mathcal{F}):=(|F_1|,|F_2|-|F_1|,\ldots,|F_{d}|-|F_{d-1}|).$\\ 

Further, for a hypergraph $\mathsf{H}$, in ~\cite{MP} is defined its \emph{splitting hypergraph} $\mathsf{H}/\mathcal{F}$ by a flag $\mathcal{F}:F_0\subset F_1\subset\cdots\subset F_k$ with
$$\mathsf{H}/\mathcal{F}\;:=\;\bigsqcup_{i=1}^k(\mathsf{H}|_{F_i})/F_{i-1},$$
where the \emph{restriction} $\mathsf{H}|_F$ and the \emph{contraction}
$\mathsf{H}/F$ are defined by $\mathsf{H}|_F:=\{H\in\mathsf{H}\,:\,H\subseteq F\}$ and $H/F:=\{H\setminus F:H\in\mathsf{H}\}.$ In the same paper, the \emph{weighted integer points enumerator} is defined as
\begin{equation}\label{for1}
F_q(Q_\mathsf{H})\;\;\;:=\sum_{\mathcal{F}\in L(Pe^{n-1})}
q^{\mathsf{rk}(\mathsf{H}/\mathcal{F})}M_\mathcal{F},
\end{equation}
where $\mathsf{rk}(\mathsf{H}/\mathcal{F}):=n-c(\mathsf{H}/\mathcal{F})$, $c(\mathsf{H}/\mathcal{F})$ is the number of connected components of the hypergraph $\mathsf{H}/\mathcal{F}$ and $L(Pe^{n-1})$ is the face lattice of the generalized permutohedron $Pe^{n-1}$.

\begin{definition}\emph{ 
For a simple graph $\Gamma=([n],E)$ and an integer $m\in\mathbb{N}$ we define the \emph{$m-$graph polytope} 
$$Q_{\Gamma,m}\;\;\;:=\sum_{\substack{S\subseteq[n],\,|S|\leq m+1\\\Gamma|_S\text{ is connected}}}\Delta_S.$$
}\end{definition}

Note that $Q_{\Gamma,1}$ is a graphical zonotope and $Q_{\Gamma,m}$ is a graph$-$associahedron for $m\geq n-1$.
If $\mathsf{H}_{m}(\Gamma):=\left\{S\subseteq[n]\,:\,|S|\leq m+1\text{ and }\Gamma|_{S}\text{ is connected}\right\}$, then $Q_{\Gamma,m}$ is the hypergraphical polytope $Q_{\mathsf{H}_{m}(\Gamma)}$.

\section{Hopf algebras of decorated graphs}

We say that $\Gamma^w=([n],E,w)$ is a \emph{decorated graph} if $\Gamma=([n],E)$ is a simple graph and $w:E\rightarrow\mathbb{N}$ is the \emph{decoration} of $\Gamma$. Let $\Gamma^w|_S$ be the induced decorated subgraph on $S\subseteq [n]$ and $\Gamma^w/S$ be induced subgraph on $[n]\setminus S$ with additional edges $uv$ for all pairs of vertices $u,v\in[n]\setminus S$ connected by edge
paths through $S$. The decoration of a new edge $uv$ is the minimal sum of decorations over edge paths through $S$, i.e.
$$w(uv)\;\;\;\;\;:=\min_{\substack{us_1,\,s_1s_2,\,\ldots,\,s_kv\in\, E(\Gamma^w)\\ s_1,s_2,\ldots,s_k\in S}}\{w(us_1)+w(s_1s_2)+\cdots+w(s_kv)\}.$$
We call $\Gamma^w|_S$ the \emph{ripping} of a decorated graph $\Gamma^w$ to $S$ and $\Gamma^w/S$ the \emph{sewing} of a decorated graph $\Gamma^w$ by $S$.

\begin{example}
For the decorated graph $\Gamma^w$ and the subset $S=\{1,4,5,6\}$, the ripping $\Gamma^w|_S$ and the sewing $\Gamma^w/S$ are given on the Figure~\ref{fig:slika2}.

\vspace*{-4mm}
\begin{figure}[h!]
\centering
\includegraphics[width=0.65\textwidth]{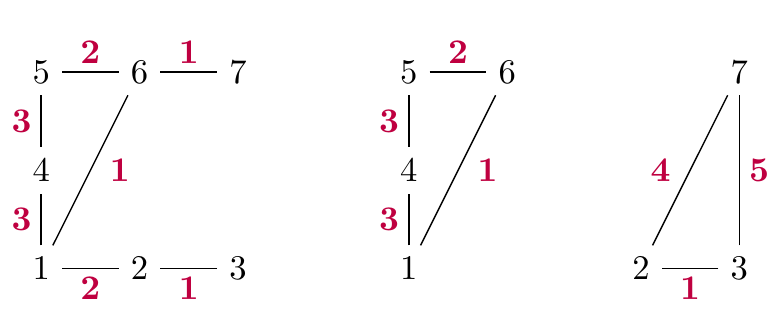}
\caption{Decorated graphs $\Gamma^w$, $\Gamma^w|_S$ and $\Gamma^w/S$.}
\label{fig:slika2}
\end{figure}
\end{example}

We say that $\Gamma^w$ is an \emph{$1-$uniform decorated graph}, denoted by $\Gamma^\bold{1}$, if $w(uv)=1$ for all $uv\in E$. Then, the decoration of an additional edge $uv$ in $\Gamma^{\textbf1}/S$, for $S\subseteq [n]$, is the length of the shortest path through $S$ from $u$ to $v$.

\medskip

Two decorated graphs are \emph{isomorphic} if there is a bijection between them, which preserves decoration.
Let $\mathcal{G}^{W}_n$ denote the $\bold{k}-$span of all isomorphism classes of decorated graphs on $n$ vertices, where  $\mathcal{G}^{W}_0:=\bold{k}\{\emptyset\}$ and $\emptyset$ is the unique decorated graph on the empty set.  For each $m\in\mathbb{N}$ we will endow 
$$\mathcal{G}^{W}:=\bigoplus_{n\geq0}\mathcal{G}^{W}_n$$
with the structure of a graded Hopf algebra. The \emph{unit} $u:\bold{k}\rightarrow\mathcal{G}^{W}$, \emph{counit} $\varepsilon:\mathcal{G}^{W}\rightarrow\bold{k}$ and the \emph{product} $\mu:\mathcal{G}^{W}\otimes\mathcal{G}^{W}\rightarrow\mathcal{G}^{W}$
are the same for all $m$ and they are defined by $u(1):=\emptyset$, 
$$\varepsilon(\Gamma^w)=\begin{cases}1,&\Gamma^w=\emptyset,\\0,&\text{otherwise},\end{cases}
\qquad\text{and}\qquad
\Gamma^{w_1}_1\cdot \Gamma^{w_2}_2:=(\Gamma_1\sqcup \Gamma_2)^{w_1w_2}.$$ 
Here, the decoration $w_1w_2:E_1\sqcup E_2\rightarrow\mathbb{N}$ is defined with
$$w_1w_2(uv)=\begin{cases}w_1(uv),&uv\in E_1,\\w_2(uv),&uv\in E_2. 
\end{cases}$$
For an integer $m\in\mathbb{N}$ we define the \emph{coproduct} $\Delta_m:\mathcal{G}^{W}\rightarrow\mathcal{G}^{W}\otimes\mathcal{G}^{W}$ by
$$\Delta_m(\Gamma^w)=\sum_{S\subseteq [n]}\mathrm{pr}_m\left(\Gamma^w|_S\right)\otimes\mathrm{pr}_m\left(\Gamma^w/S\right),\;\;\;\Gamma^w\in\mathcal{G}^{W}_{n},$$ 
where $\mathrm{pr}_m:\mathcal{G}^{W}\rightarrow\mathcal{G}^{W}$ is the map which deletes all edges greater than $m$ of the decoration $w$.\\

The antipode $\mathcal{S}_m$ of $\Gamma^w$ is determined by general Takeuchi’s formula for the antipode of a graded connected bialgebra
$$\mathcal{S}(\Gamma^w)=\sum_{k\geq1}(-1)^k\sum_{\mathcal{F}_k}\prod_{i=1}^k\mathrm{pr}_m(\Gamma^w|_{F_{i}}/F_{i-1}),$$
where the inner sum goes over all flags of subset $\mathcal{F}_k:\emptyset=F_0\subset F_1\subset\cdots\subset F_k=[n].$
\begin{proposition}
For all $m\in\mathbb{N}$,
$\mathcal{G}^{W,m}=(\mathcal{G}^W,\mu,u,\Delta_m,\varepsilon,\mathcal{S}_m)$ is a graded connected Hopf algebra.
\end{proposition}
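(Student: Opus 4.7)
The plan is to verify the bialgebra axioms for $(\mathcal{G}^W, \mu, u, \Delta_m, \varepsilon)$ and then appeal to the standard fact that a graded connected bialgebra automatically admits an antipode given by Takeuchi's formula, which is precisely the $\mathcal{S}_m$ already displayed. Associativity and the unit axiom are immediate from the definition of $\mu$ as disjoint union with the empty decorated graph as identity, and this product manifestly respects the grading by vertex count.

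The core of the argument is coassociativity of $\Delta_m$. I would first establish the structural identity that for every chain $T \subseteq S \subseteq [n]$ one has $(\Gamma^w|_S)/T = (\Gamma^w/T)|_{S \setminus T}$ as decorated graphs: both have vertex set $S \setminus T$, and an edge between $u,v \in S \setminus T$ in either graph is witnessed by an edge-path in $\Gamma^w$ from $u$ to $v$ whose interior vertices lie in $T \subseteq S$, so the path is automatically contained in $\Gamma^w|_S$ and the minimum-weight decoration is computed over the same set. The remaining subtlety is that $\Delta_m$ inserts $\mathrm{pr}_m$ at every step, so I need to show this projection commutes appropriately with ripping and sewing. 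Ripping trivially commutes with $\mathrm{pr}_m$, since restriction only discards edges with an endpoint outside the kept subset. For sewing the key observation is that edge weights are positive integers, so any path of total weight $\leq m$ consists of edges each of weight $\leq m$; consequently, deleting heavy edges before sewing produces, after a further application of $\mathrm{pr}_m$, the same contraction as sewing first and deleting heavy new edges afterward. Combining these, both $(\Delta_m \otimes \mathrm{id})\Delta_m$ and $(\mathrm{id} \otimes \Delta_m)\Delta_m$ can be rewritten as sums over chains $T \subseteq S \subseteq [n]$ with common summand
$$\mathrm{pr}_m(\Gamma^w|_T) \otimes \mathrm{pr}_m\bigl((\Gamma^w|_S)/T\bigr) \otimes \mathrm{pr}_m(\Gamma^w/S),$$
which gives coassociativity.

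The remaining axioms are more direct. For the counit, $\varepsilon$ selects $S = \emptyset$ from the left tensor slot (or $S = [n]$ from the right), in either case producing $\mathrm{pr}_m(\Gamma^w)$, which is the identity on the subspace $\mathrm{pr}_m(\mathcal{G}^W)$ of graphs with all weights at most $m$ that effectively carries the structure. Multiplicativity of $\Delta_m$ is checked by noting that in a disjoint union $\Gamma_1 \sqcup \Gamma_2$ every subset decomposes uniquely as $S = S_1 \sqcup S_2$ with no edges or paths crossing components, so ripping, sewing, and $\mathrm{pr}_m$ all distribute over disjoint union. Graded connectedness follows from $\mathcal{G}^W_0 = \mathbf{k}\{\emptyset\}$, and Takeuchi's formula then produces the displayed antipode $\mathcal{S}_m$. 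The main technical obstacle is the bookkeeping around $\mathrm{pr}_m$ in the coassociativity step; the whole argument hinges on the tiny but indispensable fact that a short weighted path consists only of short weighted edges.
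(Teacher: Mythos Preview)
Your proposal is correct and follows the same strategy as the paper: coassociativity is reduced to the identity $(\Gamma^w|_{S_2})/S_1 = (\Gamma^w/S_1)|_{S_2\setminus S_1}$ on the middle tensor factor (together with $(\Gamma^w/S_1)/(S_2\setminus S_1)=\Gamma^w/S_2$ on the third), and compatibility of product and coproduct is checked via the distributivity of ripping and sewing over disjoint union. You are in fact more explicit than the paper about how $\mathrm{pr}_m$ interacts with sewing---the paper simply writes down the iterated coproduct formula as if this commutation were automatic---and your observation that the counit axiom only literally returns $\mathrm{pr}_m(\Gamma^w)$ rather than $\Gamma^w$ is a genuine subtlety the paper's proof does not address, merely declaring the remaining axioms ``straightforward.''
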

\begin{proof}
We prove compatibility of the product and coproduct and coassociativity, since other properties are straightforward. For a decorated graph $\Gamma^w$ of $V$, one has the following identities
{\small
\begin{align*}
((\Delta\otimes\mathsf{Id})\circ\Delta)(\Gamma^w)
&=\sum_{\emptyset\subseteq S_1\subseteq S_2\subseteq [n]}
\mathrm{pr}_m(\Gamma^w|_{S_1})\otimes\mathrm{pr}_m(\Gamma^w|_{S_2}/S_1)\otimes\mathrm{pr}_m(\Gamma^w/S_2)\\
((\mathsf{Id}\otimes\Delta)\circ\Delta)(\Gamma^w)
&=\sum_{\emptyset\subseteq S_1\subseteq S_2\subseteq [n]}
\mathrm{pr}_m(\Gamma^w|_{S_1})\otimes\mathrm{pr}_m(\Gamma^w/S_1|_{S_2\setminus S_1})\\
&\qquad\qquad\qquad\qquad\qquad\;\;\;\;
\otimes\mathrm{pr}_m((\Gamma^w/S_1)/(S_2\setminus S_1)).
\end{align*}}

\vspace*{-3mm}
\noindent
Since $(\Gamma^w/S_1)/(S_2\setminus S_1)=\Gamma^w/S_2$, it is sufficient to show that 
$$\mathrm{pr}_m(\Gamma^w|_{S_2}/S_1)=\mathrm{pr}_m(\Gamma^w/S_1|_{S_2\setminus S_1}).$$ 
Let $uv\in \Gamma^w|_{S_2}/S_1$ and $w(uv)\leq m.$ That means that $u$ and $v$ are connected in $\Gamma^w|_{S_2\setminus S_1}$ or there is a path in $\Gamma^w|_{S_2}$ throught $S_1$. In both cases $uv\in\mathrm{pr}_m(\Gamma^w/S_1|_{S_2\setminus S_1})$. Also, if $uv\in\Gamma^w/S_1|_{S_2\setminus S_1}$ and $w(uv)\leq m$, then $u$ and $v$ are connected in $\Gamma^w|_{S_2\setminus S_1}$ or there is a path in $\Gamma^w|_{S_2}$ throught $S_1.$ Again, in both cases $uv\in\mathrm{pr}_m(\Gamma^w|_{S_2}/S_1).$

\medskip

\noindent
Furthermore, for a pair of decorated graphs $\Gamma^{w_1}_1\in\mathcal{G}^{W,m}_{n_1}$ and $\Gamma^{w_2}_2\in\mathcal{G}^{W,m}_{n_2}$ and subsets $S_1\subset [n_1]$ and $S_2\subset [n_2]$ one has isomorphisms
$$(\Gamma^{w_1}_1|_{S_1})\cdot(\Gamma^{w_2}_2|_{S_2})=(\Gamma^{w_1}_1\cdot \Gamma^{w_2}_2)|_{S_1\sqcup S_2}$$
and
$$(\Gamma^{w_1}_1/S_1)\cdot(\Gamma^{w_2}_2/S_2)=(\Gamma^{w_1}_1\cdot \Gamma^{w_2}_2)/(S_1\sqcup S_2),$$ 
which proves commutativity of the bialgebra diagram in the definition of  Hopf algebra.
\end{proof}

Now we define $\zeta_q:\mathcal{G}^{W,m}\rightarrow\bold{k}[\,q\,]$ by
$$\zeta_q(\Gamma^w):=q^{n-c(\Gamma^w)},\;\;\;\Gamma^w\in\mathcal{G}^{W,m}_n,$$
where $c(\Gamma^w)$ is the number of connected components of $\Gamma.$ It is straightforward that $\zeta_q$ is a multiplicative morphism, which turns $(\mathcal{G}^{W,m},\zeta_q)$ into a combinatorial Hopf algebra.

\medskip

By the fundamental theorem of combinatorial Hopf algebras (see~\cite{ABS}, Theorem 4.1), there is a a unique morphism
$$\Psi_q^m:(\mathcal{G}^{W,m},\zeta_q)\rightarrow(\mathcal{Q}Sym,\zeta).$$
In the monomial basis it is given by
$$\Psi_q^m(\Gamma^w)=\sum_{\alpha\models n}(\zeta_q)_\alpha(\Gamma^w)M_\alpha.$$
For a composition $\alpha=(\alpha_1,\alpha_2,\ldots,\alpha_k)$, the coefficient $(\zeta_q)_\alpha(\Gamma^w)$ is determined by
$$(\zeta_q)_\alpha(\Gamma^w)=\zeta_q^{\otimes k}\circ(p_{\alpha_1}\otimes p_{\alpha_2}\otimes\cdots\otimes p_{\alpha_k})\circ\Delta_m^{k-1}(\Gamma^w),
$$
where $p_i$ is the projection on the $i-$th homogeneous component and $\Delta_m^{k-1}$ is the $(k-1)-$fold coproduct map of $\mathcal{G}^{W,m}$.\\

For a decorated graph $\Gamma^w$ and a flag of subsets $\mathcal{F}:\emptyset=F_0\subset F_1\subset\cdots\subset F_k=[\,n\,]$ let
$$\Gamma^w/\mathcal{F}\;:=\;\bigsqcup_{i=1}^k\Gamma^w|_{F_i}/F_{i-1}.$$
Thus, the coefficient corresponding to a composition $\alpha=(\alpha_1,\alpha_2,\ldots,\alpha_k)\models n$ is a
polynomial in $q$ determined by
\begin{align*}
(\zeta_q)_\alpha(\Gamma^w)
=
\sum_{\mathcal{F}:\mathsf{type}(\mathcal{F})=\alpha}\prod_{i=1}^kq^{|F_i/F_{i-1}|-c(\mathrm{pr}_m(\Gamma^w|_{F_i}/F_{i-1}))}
=
\sum_{\mathcal{F}:\mathsf{type}(\mathcal{F})=\alpha}q^{\mathsf{rk}_m(\Gamma^w/\mathcal{F})},
\end{align*}
where 
\begin{equation*}
\mathsf{rk}_m(\Gamma^w/\mathcal{F}):=n-\sum_{i=1}^kc(\mathrm{pr}_m(\Gamma^w|_{F_i}/F_{i-1})).
\end{equation*}
Finally, we obtain
\begin{equation}\label{for2}
\Psi_q^m(\Gamma^w)=\sum_{\mathcal{F}\in L(Pe^{n-1})}q^{\mathsf{rk}_m(\Gamma^w/\mathcal{F})}M_\mathcal{F},
\end{equation}
where $L(Pe^{n-1})$ is the face lattice of the generalized permutohedron $Pe^{n-1}.$

\begin{theorem}
Given a simple graph $\Gamma$ and $m\in\mathbb{N}$, let $Q_{\Gamma,m}$ be the corresponding $m-$graph polytope. Then, the following identity holds
$$F_q(Q_{\Gamma,m})=\Psi^m_q(\Gamma^{\bf 1}),$$
where $F_q(Q_{\Gamma,m})$ is the weighted integer points enumerator of a $m-$graph polytope $Q_{\Gamma,m}$.
\end{theorem}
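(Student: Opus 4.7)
The plan is to match the two quasisymmetric functions coefficient by coefficient in the monomial basis. Formula~\eqref{for1} gives
$$F_q(Q_{\mathsf{H}_m(\Gamma)})=\sum_{\mathcal{F}\in L(Pe^{n-1})}q^{\mathsf{rk}(\mathsf{H}_m(\Gamma)/\mathcal{F})}M_\mathcal{F},$$
while formula~\eqref{for2} gives
$$\Psi^m_q(\Gamma^{\bf 1})=\sum_{\mathcal{F}\in L(Pe^{n-1})}q^{\mathsf{rk}_m(\Gamma^{\bf 1}/\mathcal{F})}M_\mathcal{F}.$$
Since both $\mathsf{H}_m(\Gamma)/\mathcal{F}$ and $\Gamma^{\bf 1}/\mathcal{F}$ are defined as disjoint unions indexed by the flag $\mathcal{F}:F_0\subset\cdots\subset F_k$, the ranks split as sums of contributions from each pair $(F_{i-1},F_i)$. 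So the theorem reduces, after fixing $\mathcal{F}$ and one index $i$, to the assertion that for any $S\subseteq T\subseteq[n]$,
$$c\bigl((\mathsf{H}_m(\Gamma)|_T)/S\bigr)\;=\;c\bigl(\mathrm{pr}_m(\Gamma^{\bf 1}|_T/S)\bigr).$$
Both sides are counts of connected components on the vertex set $T\setminus S$.

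To prove this identity, I would show that the two structures induce the \emph{same} partition of $T\setminus S$, by proving that two vertices $u,v\in T\setminus S$ lie in a common component on the left if and only if they lie in a common component on the right. In the decorated graph $\mathrm{pr}_m(\Gamma^{\bf 1}|_T/S)$ there is an edge $uv$ precisely when there is a path in $\Gamma|_T$ from $u$ to $v$ whose interior lies in $S$ and whose length is at most $m$. In the hypergraph $(\mathsf{H}_m(\Gamma)|_T)/S$ two vertices lie in a common hyperedge precisely when both belong to $H\setminus S$ for some connected subset $H\subseteq T$ with $|H|\leq m+1$.

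For the forward direction I take a walk $u=u_0,u_1,\dots,u_\ell=v$ in the graph; each edge $u_ju_{j+1}$ is witnessed by a path $P_j$ in $\Gamma|_T$ of length at most $m$ with interior in $S$, so $H_j:=V(P_j)\subseteq T$ satisfies $|H_j|\leq m+1$ and $\Gamma|_{H_j}$ is connected, hence $H_j\in\mathsf{H}_m(\Gamma)$ provides the required hyperedge $H_j\setminus S$ containing $\{u_j,u_{j+1}\}$. For the converse, if $u,v\in H\setminus S$ with $H\subseteq T$, $\Gamma|_H$ connected and $|H|\leq m+1$, pick a path $u=w_0,w_1,\dots,w_p=v$ inside $\Gamma|_H$ of length $p\leq m$; its subsequence of vertices in $T\setminus S$, say $u=w_{j_0},w_{j_1},\dots,w_{j_t}=v$, breaks the path into consecutive subpaths whose interiors lie in $S$, of total length $p\leq m$, so in particular each subpath has length $\leq m$, producing edges $w_{j_s}w_{j_{s+1}}$ of $\mathrm{pr}_m(\Gamma^{\bf 1}|_T/S)$.

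The main obstacle is exactly this last step: the hypergraph side naturally allows paths through $T\setminus S$ as intermediate vertices, while the ripped-sewn-pruned graph only records shortcuts through $S$; the argument resolving this asymmetry is the subpath decomposition above, which crucially uses the $1$-uniform decoration (so path length equals $|H|-1$) and the bound $|H|\leq m+1$ defining $\mathsf{H}_m(\Gamma)$. Once the partition-equality is established, summing over $i$ yields $\mathsf{rk}(\mathsf{H}_m(\Gamma)/\mathcal{F})=\mathsf{rk}_m(\Gamma^{\bf 1}/\mathcal{F})$ for every flag $\mathcal{F}$, and the theorem follows by comparing the two expansions.
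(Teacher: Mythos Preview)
Your proposal is correct and follows essentially the same strategy as the paper: reduce via \eqref{for1} and \eqref{for2} to showing $c((\mathsf{H}_m(\Gamma)|_{F_i})/F_{i-1})=c(\mathrm{pr}_m(\Gamma^{\bf 1}|_{F_i}/F_{i-1}))$ for each step of the flag, then compare the adjacency relations on $F_i\setminus F_{i-1}$ coming from the hypergraph and from the ripped--sewn--pruned graph. Your subpath decomposition in the converse direction is in fact more precise than the paper's terse argument, which asserts an ``if and only if'' at the level of edges that, strictly speaking, only holds at the level of connected components.
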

\begin{proof}
From~(\ref{for1}) and~(\ref{for2}), it is suifficient to prove that for an any flag of subsets $\mathcal{F}:\emptyset=F_0\subset F_1\subset\cdots\subset F_k=[n]$ and $1\leq i\leq k$ it holds
$$c(\mathrm{pr}_m(\Gamma^{\bold{1}}|_{F_i}/F_{i-1}))=c(\mathsf{H}_{m}(\Gamma)|_{F_i}/F_{i-1}).$$
All edges in the decorated graph $\mathrm{pr}_m(\Gamma^{\textbf1}|_{F_i}/F_{i-1})$ have decorations less than or equal to $m$, i.e. $uv\in E\left(\mathrm{pr}_m(\Gamma^{\textbf1}|_{F_i}/F_{i-1})\right)$ if and only if there is a path from $u$ to $v$ of the length not greater than $m$ in the graph $\Gamma|_{F_i}$. Equivalently, there exists $H\in\mathsf{H}_{m}(\Gamma)$ such that $u,v\in H.$
\end{proof}

From the general theorem of generalized permutohedra (see~\cite{GPS}, Theorem 4.4), the $f-$polynomial of a $m-$graph polytope $Q_{\Gamma,m}$ is determined by the principal specialization of the enumerator $F_q(Q_{\Gamma,m})$, i.e.
\begin{align}
\label{f_vektor}
f(Q_{\Gamma,m},q)=(-1)^n\mathsf{ps}^1(\Psi_{-q}^m(\Gamma^\bold{1}))(-1).
\end{align}

\begin{example}
For the line graph $L_4=([4],\{12,23,34\})$, we have
\begin{align*}
F_q(Q_{L_4,1})
&=q^3M_4+2q^2(M_{1,3}+M_{3,1}+M_{2,2})\\
&+2q(M_{1,3}+M_{3,1}+M_{2,2}+3M_{2,1,1}+3M_{1,2,1}+3M_{1,1,2})\\
&+2M_{2,2}+6M_{1,1,2}+6M_{1,2,1}+6M_{2,1,1}+24M_{1,1,1,1},\\
F_q(Q_{L_4,2})
&=q^3M_4+2q^2(M_{1,3}+2M_{3,1}+M_{2,2})\\
&+2q(M_{3,1}+2M_{2,2}+5M_{1,1,2}+4M_{1,2,1}+3M_{2,1,1})\\
&+\;\;\;\;\;\;\;\;\;\;\;\;\;\,2M_{1,1,2}+4M_{1,2,1}+6M_{2,1,1}+24M_{1,1,1,1},\\
F_q(Q_{L_4,3})
&=q^3M_4+q^2(2M_{3,1}+4M_{3,1}+3M_{2,2})\\
&=q(
2M_{3,1}+M_{2,2}+12M_{1,1,2}+8M_{1,2,1}+6M_{2,1,1})\\
&+\;\;\;\;\;\;\;\;\;\;\;\;\;\;\;\;\;\;\;\;\;\;\;\;\;\;\;\;\;4M_{1,2,1}+6M_{2,1,1}+24M_{1,1,1,1}.
\end{align*} 
Corresponding $f$--polynomials are determined by the principal specialization

\vspace*{-9mm}
\begin{align*}
f(Q_{L_4,1},q)
&=q^3+6q^2+12q+8,\\
f(Q_{L_4,2},q)
&=q^3+8q^2+18q+12,\\
f(Q_{L_4,3},q)
&=q^3+9q^2+21q+14.
\end{align*}

\begin{figure}[h!]
\centering
\includegraphics[width=0.75\textwidth]{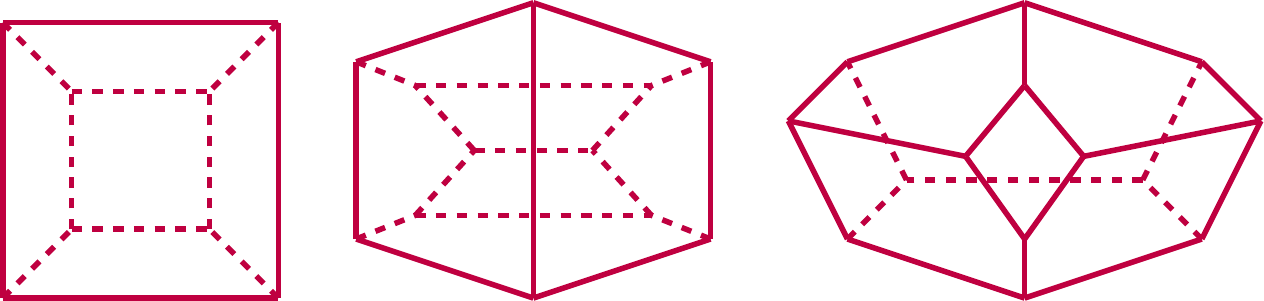}
\caption{Graph polytopes $Q_{L_4,1}$, $Q_{L_4,2}$ and $Q_{L_4,3}$.}
\label{fig:slika1}
\end{figure}

\end{example}

In the sequel, we associate the sequence of quasysimmetric functions 
$$(\Psi_q^1(\Gamma^\bold{1}),\Psi_q^2(\Gamma^\bold{1}),\ldots,\Psi_q^{n-1}(\Gamma^\bold{1}))$$ 
to a simple graph $\Gamma$ on $n$ vertices. 
The following theorem answers how this sequence stabilizes. 

\begin{theorem}\label{thm_1}
Given a connected simple graph $\Gamma$, let $m$ be the cardinality of the maximal subset $M\subseteq[n]$ such that $\Gamma|_M\cong L_{|M|}$. Then, for all $k\geq |M|$ we have
$$\Psi^{k}_q(\Gamma^{\bold{1}})=\Psi^{|M|-1}_q(\Gamma^{\bold{1}}).$$
\end{theorem}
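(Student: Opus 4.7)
The plan is to work with the explicit expansion
$$\Psi_q^m(\Gamma^{\mathbf{1}})\;=\;\sum_{\mathcal{F}\in L(Pe^{n-1})}q^{\mathsf{rk}_m(\Gamma^{\mathbf{1}}/\mathcal{F})}M_{\mathcal{F}}$$
from~(\ref{for2}). Because the $M_\mathcal{F}$ are linearly independent across compositions, it suffices to prove that $\mathsf{rk}_m(\Gamma^{\mathbf{1}}/\mathcal{F})$ is independent of $m$ in the range $m\geq|M|-1$, for every flag $\mathcal{F}$. Unwinding the definition $\mathsf{rk}_m(\Gamma^{\mathbf{1}}/\mathcal{F})=n-\sum_i c(\mathrm{pr}_m(\Gamma^{\mathbf{1}}|_{F_i}/F_{i-1}))$, this further reduces to showing that for each consecutive pair $F_{i-1}\subset F_i$ of the flag, the decorated graph $\mathrm{pr}_m(\Gamma^{\mathbf{1}}|_{F_i}/F_{i-1})$ does not change once $m\geq|M|-1$.

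I will establish the stronger statement that every edge of $\Gamma^{\mathbf{1}}|_{F_i}/F_{i-1}$ already has decoration at most $|M|-1$, so $\mathrm{pr}_m$ acts as the identity on this graph for every such $m$. Fix an edge $uv$ with $u,v\in F_i\setminus F_{i-1}$. If $uv\in E(\Gamma)$ its decoration is $1$ and we are done, since $|M|\geq 2$ whenever $\Gamma$ has any edge (the case $|M|=1$ forces $\Gamma$ to be a single vertex and the theorem to be vacuous). Otherwise the edge was created by the sewing operation, and its decoration equals the minimum length $\ell$ of a path $u=v_0,v_1,\ldots,v_\ell=v$ in $\Gamma|_{F_i}$ with internal vertices $v_1,\ldots,v_{\ell-1}\in F_{i-1}$.

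The crucial step is to argue that a shortest such path is an induced subgraph of $\Gamma$. Suppose for contradiction that $v_av_b\in E(\Gamma)$ for some $0\leq a<b\leq\ell$ with $b\geq a+2$. All $v_j$ belong to $F_i$, so the chord lies in $\Gamma|_{F_i}$, and the shortcut $v_0,\ldots,v_a,v_b,\ldots,v_\ell$ has length $\ell-(b-a)+1<\ell$. Its internal vertices form a subset of $\{v_1,\ldots,v_{\ell-1}\}\subseteq F_{i-1}$, except when $(a,b)=(0,\ell)$; but in that case $uv\in E(\Gamma)$, contrary to the case assumption. Hence the shortcut is a strictly shorter valid path through $F_{i-1}$, contradicting the minimality of $\ell$. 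The $\ell+1$ vertices of the original path therefore induce a copy of $L_{\ell+1}$ in $\Gamma$, and the maximality of $M$ forces $\ell+1\leq|M|$, i.e., $\ell\leq|M|-1$, as required.

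The only nontrivial idea in this argument is the induced-path observation; everything else is a direct unpacking of the formula for $\Psi_q^m$ together with the definitions of the sewing operation and of $\mathrm{pr}_m$.
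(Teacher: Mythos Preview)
Your proof is correct and takes the same approach as the paper: both reduce the equality of the $\Psi^m_q$ to showing that every edge of $\Gamma^{\mathbf{1}}|_{F_i}/F_{i-1}$ already has decoration at most $|M|-1$, the key point being that a shortest path realizing such a decoration is induced in $\Gamma$ and hence a copy of $L_{\ell+1}$ with $\ell+1\leq|M|$. Your chord-shortcut argument makes this step explicit, whereas the paper's version is terser (it cites the diameter of $\Gamma$ without spelling out why that bounds the sewing weight).
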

\begin{proof}
It is sufficient to prove that $c\left(\mathrm{pr}_k(\Gamma^\bold{1}/\mathcal{F})\right)=c\left(\mathrm{pr}_{|M|-1}(\Gamma^\bold{1}/\mathcal{F})\right)$ for all $k\geq|M|$ and for all flags $\mathcal{F}$ of subsets of the set $[n]$.
Let us suppose that $u,v\in F_{i}\setminus F_{i-1}$ for the flag $\mathcal{F}:\emptyset=F_0\subset F_1\subset\cdots\subset F_k=[n]$. 
Since the maximal distance between vertices in the graph $\Gamma$ is $m-1$, it follows that $w(u,v)\leq |M|-1$. 
If $u$ and $v$ are connected in $\Gamma|_{F_{i}}$, then $u$ and $v$ are not connected in $\mathrm{pr}_t(\Gamma^\bold{1}/\mathcal{F})$ for $t<w(u,v)$, but they are connected in $\mathrm{pr}_t(\Gamma^\bold{1}/\mathcal{F})$ for all $t\geq w(u,v)\geq|M|-1$. Similarly, if $u$ and $v$ are not connected in $\Gamma|_{F_{i}}$, then $u$ and $v$ are not connected in $\mathrm{pr}_k(\Gamma^\bold{1}/\mathcal{F})$ for any $k$.
\end{proof}

\begin{example}
Note that $\Psi^k_q(K_n^\bold{1})=\Psi^1_q(K_n^\bold{1})$ for all $k\geq1$, where $K_n$ is the complete graph on $n$ vertices. For the star graph $S_n$, it holds $\Psi^k_q(S_n^\bold{1})=\Psi^2_q(S_n^\bold{1})$ for all $k\geq2$.
\end{example}

\begin{corollary}
The line graph $L_n$ is the only graph on $n$ vertices such that $(\Psi^1_q(L_n^\bold{1}),$ $\Psi^2_q(L_n^\bold{1}),$ $\ldots,$$ \Psi^{n-1}_q(L_n^\bold{1}))$ 
are different quasysimmetric functions.
\end{corollary}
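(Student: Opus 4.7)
The plan is to separate the corollary into its two directions and handle each by a different technique.

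For the forward direction (if $\Gamma$ is a simple graph on $n$ vertices with $\Gamma\not\cong L_n$, then two entries of the sequence coincide), I would apply Theorem~\ref{thm_1}. In the connected case, let $M$ be the maximum subset with $\Gamma|_M\cong L_{|M|}$; since $\Gamma=\Gamma|_{[n]}$ is not an induced line on $n$ vertices, $|M|\leq n-1$, so Theorem~\ref{thm_1} yields $\Psi^{|M|}_q(\Gamma^{\bf 1})=\Psi^{|M|-1}_q(\Gamma^{\bf 1})$, with both indices inside $\{1,\dots,n-1\}$ whenever $\Gamma$ has at least one edge (so $|M|\geq 2$). For disconnected $\Gamma$, write $\Gamma=\Gamma_1\sqcup\cdots\sqcup\Gamma_s$ with each $\Gamma_i$ on at most $n-1$ vertices; the fact that $\Psi^m_q$ is a Hopf algebra morphism makes it multiplicative on $\Gamma^{\bf 1}=\Gamma_1^{\bf 1}\cdots\Gamma_s^{\bf 1}$, and Theorem~\ref{thm_1} applied to each factor forces stabilization at index $n-1$ at the latest.

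For the reverse direction (if $\Gamma=L_n$ then the $n-1$ quasisymmetric functions are pairwise distinct), I would exhibit, for every pair $1\leq i<j\leq n-1$, a monomial whose coefficients in $\Psi^i_q(L_n^{\bf 1})$ and $\Psi^j_q(L_n^{\bf 1})$ differ. I take the monomial $M_{(i,n-i)}$, whose coefficient is a sum of $q^{\mathsf{rk}_t(L_n^{\bf 1}/\mathcal F)}$ over two-step flags $\mathcal F:\emptyset\subset F_1\subset[n]$ with $|F_1|=i$. Because $L_n$ is a path, a sewing edge $\{u,v\}$ in $L_n^{\bf 1}/F_1$ of weight $w$ exists only if $\{u+1,\dots,u+w-1\}\subseteq F_1$; with $|F_1|=i$, an edge of weight exactly $i+1$ forces $i$ consecutive elements to lie in $F_1$, exhausting it, so such an edge can arise only when $F_1=\{u+1,\dots,u+i\}$ is a contiguous interior block, of which there are $n-i-1$. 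For each such flag the weight-$(i+1)$ edge is a bridge of the sewed path, whereas for every other flag with $|F_1|=i$ (contiguous end block or non-contiguous) all sewing edges have weight at most $i$, so the contribution is independent of the choice $t\in\{i,i+1,\dots\}$. Summing gives
\[
[M_{(i,n-i)}]\,\Psi^{j}_q(L_n^{\bf 1})-[M_{(i,n-i)}]\,\Psi^{i}_q(L_n^{\bf 1})=(n-i-1)\bigl(q^{n-2}-q^{n-3}\bigr),
\]
which is nonzero for every $i\leq n-2$, a condition automatic when $i<j\leq n-1$.

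The main obstacle is the sewing bookkeeping in the second direction: one must verify, by a short pigeonhole argument on gap sizes, that no non-contiguous $F_1$ of size $i$ creates a sewing edge of weight exceeding $i$, and that the two contiguous end blocks produce no sewing edges at all. Once these observations are in hand, the contributions from the ``other'' flags cancel between $\Psi^i_q$ and $\Psi^j_q$, and the explicit difference displayed above makes the pairwise distinctness transparent.
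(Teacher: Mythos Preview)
Your proposal is correct, and in fact goes well beyond what the paper supplies: the corollary is stated there with no proof at all, the tacit idea being that it drops out of Theorem~\ref{thm_1}. Your forward direction is exactly that intended application of Theorem~\ref{thm_1}; you also handle the disconnected case via multiplicativity of $\Psi^m_q$, which the paper does not address (Theorem~\ref{thm_1} is stated only for connected $\Gamma$), so this is a genuine completion of the argument.

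The reverse direction --- that the $n-1$ invariants of $L_n$ are pairwise distinct --- is \emph{not} a consequence of Theorem~\ref{thm_1} (that theorem only gives coincidences, never separations), and the paper offers no argument for it. Your approach of comparing the coefficients of $M_{(i,n-i)}$ is a clean way to do this: the pigeonhole observation that a non-contiguous $F_1$ of size $i$ has all maximal sub-blocks of size at most $i-1$, hence all sewing weights at most $i$, is exactly what makes the ``other'' flags contribute identically to $\Psi^i_q$ and $\Psi^j_q$, and your count of the $n-i-1$ contiguous interior blocks is correct. The resulting difference $(n-i-1)(q^{n-2}-q^{n-3})$ is indeed nonzero whenever $1\le i<j\le n-1$ (forcing $i\le n-2$ and $n\ge 3$). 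One cosmetic remark: the phrase ``forces stabilization at index $n-1$ at the latest'' in the disconnected case would read better as ``forces $\Psi^{n-2}_q=\Psi^{n-1}_q$'', since that is precisely what you use.
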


For the graph $\Gamma$ on $n$ vertices, the enumerator $\Psi^1_0(\Gamma^\bold{1})$ is Stanley chromatic symmetric function of graph $\Gamma$ and the enumerator $\Psi^{n-1}_0(\Gamma^\bold{1})$ is chromatic quasisymmetric function introduced in \cite{G}. There is only one pair of graphs on five vertices with the same Stanley chromatic functions, but their chromatic quasisymmetric functions are different. On the other hand, there are three pairs of graphs on six vertices whose chromatic quasisymmetric functions are the same, but the corresponding Stanley chromatic functions are not. 

\begin{example}
For graphs $\Gamma_1$ and $\Gamma_2$, see Figure \ref{fig:slika11}, with the same Stanley chromatic symmetric functions, we have
{\small
\begin{align*}
\Psi^1_0(\Gamma_1)=\Psi^1_0(\Gamma_2)&=4M_{1,2,2}+4M_{2,1,2}+4M_{2,2,1}\\
&+24M_{1,1,1,2}+24M_{1,1,2,1}+24M_{1,2,1,1}+24M_{2,1,1,1}+120M_{1,1,1,1,1}.
\end{align*}
}
\begin{figure}[h!]
\centering
\includegraphics[width=0.35\textwidth]{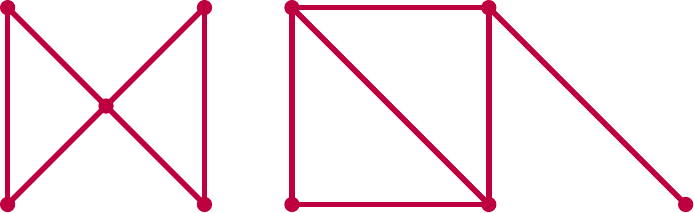}
\caption{Graphs $\Gamma_1$ and $\Gamma_2$ with the same Stanley chromatic symmetric functions}
\label{fig:slika11}
\end{figure}

\vspace*{-4mm}
\noindent
By Theorem \ref{thm_1}, $\Psi_0^k(\Gamma_1)=\Psi_0^2(\Gamma_1)$, for $k\geq2$, and $\Psi_0^k(\Gamma_2)=\Psi_0^3(\Gamma_2)$, for $k\geq3$. Also, the following equations hold
{\small
\begin{align*}
\Psi^2_0(\Gamma_1)&=4M_{2,2,1}\qquad\qquad\;\;\;\;
+\;8\,M_{1,1,2,1}+16M_{1,2,1,1}+24M_{2,1,1,1}+120M_{1,1,1,1,1},\\
\Psi^2_0(\Gamma_2)&=\qquad\qquad\;\, 6M_{1,1,1,2}+10M_{1,1,2,1}+16M_{1,2,1,1}+24M_{2,1,1,1}+120M_{1,1,1,1,1},
\\
\Psi^3_0(\Gamma_2)&=\qquad\qquad\qquad\qquad\;\;\;\;\;\;\; 6M_{1,1,2,1}+16M_{1,2,1,1}+24M_{2,1,1,1}+120M_{1,1,1,1,1}.
\end{align*}}
\end{example}

\begin{example}
For the graphs in Figure 4, we have that the corresponding chromatic quasisymmetric functions are the same, i.e.
$\Psi^3_0(\Gamma_3)=\Psi^3_0(\Gamma_4)$. Note that the coefficient on $M_{1,1,1,1,2}$ in $\Psi^2_0(\Gamma_3)$ is 24 and the coefficient on $M_{1,1,1,1,2}$ in $\Psi^2_0(\Gamma_4)$ is 0, so
$$\Psi^2_0(\Gamma_3)\neq\Psi^2_0(\Gamma_4).$$ 

\vspace*{-4mm}
\begin{figure}[h!]
\centering
\includegraphics[width=0.35\textwidth]{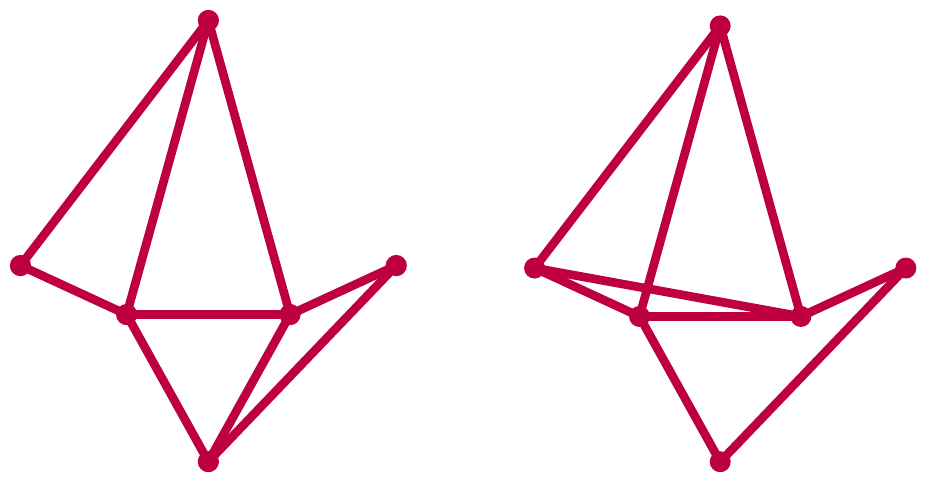}
\caption{Graphs $\Gamma_3$ and $\Gamma_4$ with the same chromatic quasisymmetric functions}
\label{fig:slika1}
\end{figure}

\noindent
Moreover, in \cite{GPS} is shown that $\Psi_0^1(\Gamma_3)\neq\Psi_0^1(\Gamma_4).$
\end{example}

In the previous theorem we have shown how the quasysimmetric functions $(\Psi_q^1(\Gamma^\bold{1}),$ $\Psi_q^2(\Gamma^\bold{1}),$ $\ldots,$ $\Psi_q^{n-1}(\Gamma^\bold{1}))$ 
associated to the graph $\Gamma$ are stabilized. Now, the following question arises: What happens with the sequence of polytopes $(Q_{\Gamma,1}, Q_{\Gamma,2},\ldots, Q_{\Gamma,n-1})$ associated to the graph $\Gamma.$ To answer this question, we will need the following lemma.

\begin{lemma}[\cite{DP}, Lemma 2.4]\label{lema_1}
Consider two polytopes $P$ and $Q$. Let $\psi$ be an injection from the
vertex set of $P$ to the vertex set of $P+Q$ such that, for every vertex $u$ of $P$, $\psi(u)=u+v$, where $v$ is a vertex of $Q$. If $\psi$ is a bijection, then the normal fan of $P$ coincides with the normal fan of $P+Q$.
\end{lemma}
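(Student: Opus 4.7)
The plan is to exploit the standard fact that the normal fan of a Minkowski sum is the common refinement of the normal fans of the summands, and then to use the bijection hypothesis to force the refinement to be trivial.

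First I would recall the key structural fact: for any polytopes $P$ and $Q$, the normal fan $\mathcal{N}_{P+Q}$ is the common refinement $\mathcal{N}_P\wedge\mathcal{N}_Q$. In particular, every normal cone of $P+Q$ is contained in a normal cone of $P$, and for any vertex $u$ of $P$ one has the decomposition
$$N_P(u)\;=\;\bigcup_{v\in V(Q)\,:\,u+v\in V(P+Q)}N_{P+Q}(u+v),$$
because a generic linear functional $c\in N_P(u)$ is maximized over $P+Q$ at $u+v^*$, where $v^*$ is the (unique for generic $c$) maximizer of $c$ over $Q$.

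Next I would use the hypothesis to collapse this union to a single term. By assumption $\psi$ is a bijection between $V(P)$ and $V(P+Q)$ of the form $\psi(u)=u+v(u)$ with $v(u)\in V(Q)$. Since every vertex of $P+Q$ arises as $u'+v'$ for some pair $(u',v')\in V(P)\times V(Q)$, and since $|V(P)|=|V(P+Q)|$, for each vertex $u$ of $P$ there is exactly one $v\in V(Q)$ with $u+v\in V(P+Q)$, namely $v=\psi(u)-u$. Hence the union above has a single term and
$$N_P(u)\;=\;N_{P+Q}(\psi(u)).$$

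Finally, since both $\mathcal{N}_P$ and $\mathcal{N}_{P+Q}$ are complete fans determined by their maximal cones (indexed by vertices), and since we have just shown that the maximal cones of the two fans coincide as sets (via $\psi$), the two fans are equal. The only step that requires care is the genericity argument justifying the Minkowski decomposition of $N_P(u)$; this is routine but should be stated cleanly so that the passage from ``$\psi$ is a bijection on vertices'' to ``the union indexing set is a singleton'' is transparent. No inequality or dimension count is really the obstacle — the content is purely the bookkeeping of how vertices of a Minkowski sum correspond to pairs of vertices of the summands.
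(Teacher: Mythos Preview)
Your argument is correct and follows the natural route via the common-refinement description $\mathcal{N}_{P+Q}=\mathcal{N}_P\wedge\mathcal{N}_Q$. The one step you should tighten is the counting: to deduce that each $u\in V(P)$ has a \emph{unique} $v\in V(Q)$ with $u+v\in V(P+Q)$, you need that the decomposition of a vertex $w\in V(P+Q)$ as a sum of a vertex of $P$ and a vertex of $Q$ is itself unique (so that the projection $V(P+Q)\to V(P)$ is a well-defined surjection, hence a bijection once $|V(P)|=|V(P+Q)|$). This uniqueness holds because $w=u+v$ with $u\in V(P)$, $v\in V(Q)$ forces $N_{P+Q}(w)=N_P(u)\cap N_Q(v)$, and two maximal cones of $\mathcal{N}_P$ cannot share a full-dimensional intersection. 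With that remark, your proof is complete.

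As for comparison with the paper: the paper does not give a proof of this lemma at all; it merely quotes it from Deza--Pournin (\cite{DP}, Lemma~2.4) and uses it as a black box. So there is nothing to compare against, and your self-contained argument is a genuine addition rather than a rederivation of something already present.
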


\begin{proposition}
Given a connected simple graph $\Gamma$, let $m$ be the cardinality of the maximal subset $M\subseteq[n]$ such that $\Gamma|_M\cong L_{|M|}$. Then, for all $k\geq|M|$ the polytopes $Q_{\Gamma,k}$ and $Q_{\Gamma,|M|-1}$ are normally equivalent, i.e. the normal fan of $Q_{\Gamma,k}$ coincides with the normal fan of $Q_{\Gamma,|M|-1}.$
\end{proposition}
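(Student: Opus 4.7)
The plan is to apply Lemma~\ref{lema_1} with $P := Q_{\Gamma,|M|-1}$ and $Q := \sum_{S}\Delta_S$, where $S$ ranges over $\mathsf{H}_k(\Gamma)\setminus\mathsf{H}_{|M|-1}(\Gamma)$, so that $P+Q = Q_{\Gamma,k}$. Recall that the vertex of a Minkowski sum of simplices $\sum_T\Delta_T$ in direction $\omega$ is $\sum_T e_{\arg\max_{t\in T}\omega_t}$, so two directions index the same vertex exactly when they select the same $\arg\max$ on each $T$ in the sum.

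The crux is the claim that for every $S \in \mathsf{H}_k(\Gamma)\setminus\mathsf{H}_{|M|-1}(\Gamma)$ the fan $\mathcal{N}_P$ refines $\mathcal{N}_{\Delta_S}$; equivalently, whenever $\omega,\omega'$ lie in a common maximal cone of $\mathcal{N}_P$, one has $\arg\max_{s\in S}\omega_s = \arg\max_{s\in S}\omega'_s$. To establish this, set $v := \arg\max_{s\in S}\omega_s$ and $v' := \arg\max_{s\in S}\omega'_s$, and choose a shortest path $v = u_0, u_1,\ldots,u_r = v'$ in the connected graph $\Gamma|_S$. Any chord $u_iu_j$ with $|i-j|>1$ would yield a shorter path, so $\Gamma|_T\cong L_{r+1}$ for $T := \{u_0,\ldots,u_r\}$; by the maximality of $M$ in $[n]$ we must have $r+1 \leq |M|$, whence $T \in \mathsf{H}_{|M|-1}(\Gamma)$. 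The hypothesis that $\omega,\omega'$ share a cone of $\mathcal{N}_P$ then forces them to agree on the $\arg\max$ over $T$; but $v,v' \in T$ are the $\omega$- and $\omega'$-maxima of $S \supseteq T$, hence also the respective maxima on $T$, and therefore $v = v'$.

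Given the claim, define $\psi$ on vertices of $P$ by $\psi(u) := u + v(\pi)$, where $\pi$ is any direction in the maximal cone of $\mathcal{N}_P$ corresponding to $u$ and $v(\pi) := \sum_{S}e_{\arg\max_{s\in S}\pi_s}$ is the vertex of $Q$ in direction $\pi$. The claim makes $v(\pi)$ depend only on $u$, and by construction $\psi(u)$ is the vertex of $P+Q$ in direction $\pi$; injectivity is immediate since $\mathcal{N}_{P+Q}$ refines $\mathcal{N}_P$, and surjectivity follows since every vertex of $P+Q$ arises from some direction $\pi$ as $u(\pi)+v(\pi)$. Lemma~\ref{lema_1} then yields $\mathcal{N}_{Q_{\Gamma,|M|-1}} = \mathcal{N}_{Q_{\Gamma,k}}$. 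The only delicate point is that a shortest path in $\Gamma|_S$ is induced in $\Gamma$ itself---any chord would be an edge of $\Gamma|_S \subseteq \Gamma$ shortening the path---so the maximality of $M$ as a subset of $[n]$, and not merely of $S$, indeed bounds its length.
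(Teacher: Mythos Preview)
Your argument is correct and takes a genuinely different route from the paper. The paper does not prove the refinement directly: instead it invokes Theorem~\ref{thm_1} together with formula~(\ref{f_vektor}) to conclude that $Q_{\Gamma,k}$ and $Q_{\Gamma,|M|-1}$ have the same $f$-polynomial, hence the same number of vertices, and then applies Lemma~\ref{lema_1} to the decomposition $Q_{\Gamma,k}=Q_{\Gamma,|M|-1}+P$. Your proof bypasses the quasisymmetric machinery entirely: the shortest-path observation shows, for each $S\in\mathsf{H}_k(\Gamma)\setminus\mathsf{H}_{|M|-1}(\Gamma)$, that any two vertices of $S$ are joined by an induced path $T$ with $|T|\le |M|$, so $\Delta_T$ is already a summand of $Q_{\Gamma,|M|-1}$ and the $\arg\max$ on $S$ is pinned down by the cone of $\mathcal{N}_{Q_{\Gamma,|M|-1}}$. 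This is essentially the geometric shadow of the distance bound used inside the proof of Theorem~\ref{thm_1}, but carried out at the level of normal fans rather than enumerators. In fact your claim already gives $\mathcal{N}_{Q_{\Gamma,|M|-1}}$ refining $\mathcal{N}_Q$, which forces $\mathcal{N}_{P+Q}=\mathcal{N}_P$ outright; the appeal to Lemma~\ref{lema_1} at the end is then harmless but unnecessary. The paper's approach is shorter given the earlier results and highlights the strength of $F_q$ as an invariant, while yours is more elementary and self-contained.

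One small point of hygiene: when you pick $\omega,\omega'$ in a common maximal cone of $\mathcal{N}_P$ and speak of $\arg\max_{s\in S}\omega_s$, you should take them generic (inside an open braid chamber) so that the $\arg\max$ is unique; the conclusion for the closed cones then follows by closure. This is implicit in your write-up but worth saying.
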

\begin{proof}
From Theorem \ref{thm_1} and (\ref{f_vektor}), we have that for all $k\geq |M|$  polytopes $Q_{\Gamma,k}$ and $Q_{\Gamma,|M|-1}$ have the same $f-$polynomial. Specially, for all $k\geq |M|$ polytopes $Q_{\Gamma,k}$ and $Q_{\Gamma,|M|-1}$ have the same number of vertices. Since
$$Q_{\Gamma,k}\;\;\;=\;\;\;Q_{\Gamma,|M|-1}\;\;\;+\sum_{\substack{S\subseteq[n],|M|+1\leq|S|\leq k+1\\ \Gamma|_S\text{ is connected}}}\Delta_S\;\;=\;\;Q_{\Gamma,|M|-1}\;\;+\;\;P,$$
it holds that the map $\psi$, from the previous lemma, is a bijection from the vertex set of $Q_{\Gamma,|M|-1}$ to the vertex set of $Q_{\Gamma,|M|-1}+P.$ According to Lemma \ref{lema_1}, the normal fan of the polytope $Q_{\Gamma,|M|-1}$ coincides with the normal fan of the polytope $Q_{\Gamma,|M|-1}+P$.
\end{proof}

\begin{theorem}
For a connected simple graph $\Gamma$ on $n$ vertices and $m\in\mathbb{N}$, the polytopes 
$$Q_{\Gamma,m}\;\;\;:=\sum_{\substack{S\subseteq[n],\,|S|\leq m+1\\c(\Gamma|_S)=1}}\Delta_S
\qquad\text{and}\qquad
Q^L_{\Gamma,m}\;\;\;:=\sum_{\substack{S\subseteq[n],\,|S|\leq m+1\\\Gamma|_S\cong L_{|S|}}}\Delta_S.$$
are normally equivalent.
\end{theorem}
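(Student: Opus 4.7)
The plan is to invoke Lemma~\ref{lema_1} with the decomposition
$$Q_{\Gamma,m} \;=\; Q^L_{\Gamma,m} \;+\; P, \qquad P \;:=\sum_{\substack{S\subseteq[n],\;|S|\leq m+1\\ c(\Gamma|_S)=1,\;\Gamma|_S\not\cong L_{|S|}}}\Delta_S,$$
and to exhibit a bijection $\psi$ between the vertex sets of $Q^L_{\Gamma,m}$ and $Q_{\Gamma,m}$ of the required form $\psi(u)=u+v$ with $v$ a vertex of $P$.

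Recall that for a Minkowski sum of simplices $R=\sum_{H\in\mathcal{H}}\Delta_H$ and any permutation $\pi$ of $[n]$ viewed as a generic direction, the corresponding vertex is $v_\pi(R)=\sum_{H\in\mathcal{H}}e_{\max_\pi H}$, where $\max_\pi H$ is the $\pi$-largest element of $H$. Since the normal fan $\mathcal{N}_R$ is the common refinement of the fans $\mathcal{N}_{\Delta_H}$, one has $v_\pi(R)=v_{\pi'}(R)$ if and only if $\max_\pi H=\max_{\pi'}H$ for every $H\in\mathcal{H}$. Writing $\sim_L$ and $\sim$ for the equivalence relations on permutations induced by $Q^L_{\Gamma,m}$ and $Q_{\Gamma,m}$ respectively, normal equivalence therefore amounts to $\sim_L\,=\,\sim$.

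The inclusion $\sim\,\subseteq\,\sim_L$ holds because every induced path is connected. For the reverse, assume $\pi\sim_L\pi'$ and fix a connected induced $S\subseteq[n]$ with $|S|\leq m+1$; set $u:=\max_\pi S$, $v:=\max_{\pi'}S$, and suppose $u\neq v$. The key input is classical: a shortest path in a connected graph admits no chord, hence is an induced path. Applied inside $\Gamma|_S$, this produces an induced path $T$ of $\Gamma$ with $u,v\in T\subseteq S$ and $|T|\leq|S|\leq m+1$. From $T\subseteq S$ one reads off $u=\max_\pi T$ and $v=\max_{\pi'}T$, but $\pi\sim_L\pi'$ forces these two quantities to be equal, contradicting $u\neq v$.

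With $\sim_L\,=\,\sim$ in hand, the rule $\psi\bigl(v_\pi(Q^L_{\Gamma,m})\bigr):=v_\pi(Q_{\Gamma,m})=v_\pi(Q^L_{\Gamma,m})+v_\pi(P)$ is a well-defined bijection between vertex sets of the form required by Lemma~\ref{lema_1}, which then yields the normal equivalence. The only substantive step is the passage from an arbitrary connected induced subgraph to an induced path; the shortest-path-is-induced observation handles it cleanly, and the remainder is bookkeeping with the standard vertex parametrization of Minkowski sums of simplices.
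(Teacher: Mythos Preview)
Your proof is correct and takes a genuinely different route from the paper's.

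The paper argues inductively: it picks a maximal non-path connected set $K$, and shows that removing the single summand $\Delta_K$ does not change the normal fan. That step is not self-contained---it relies on the preceding Proposition, which in turn uses Theorem~\ref{thm_1} (stabilization of $\Psi^k_q$) together with the $f$-polynomial identity~(\ref{f_vektor}) to guarantee that $Q_{\Gamma|_K,k}$ and $Q_{\Gamma|_K,k-1}$ have the same number of vertices, so that Lemma~\ref{lema_1} applies. One then iterates, stripping away all non-path summands.

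Your argument bypasses all of that machinery. You work directly with the standard parametrization of vertices of a Minkowski sum of simplices by permutations, reduce the problem to showing that the two equivalence relations $\sim$ and $\sim_L$ on $\mathfrak{S}_n$ coincide, and settle the nontrivial inclusion with the elementary observation that a shortest path in $\Gamma|_S$ between the two candidate maxima is automatically an \emph{induced} path of $\Gamma$ contained in $S$. This is cleaner and more self-contained: no Hopf algebra, no quasisymmetric functions, no $f$-polynomial, no induction. In fact, once you have $\sim_L=\sim$ you already know the two normal fans (both coarsenings of the braid fan) have identical maximal cones and hence coincide; your closing appeal to Lemma~\ref{lema_1} is correct but superfluous.
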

\begin{proof}
Let be $M\subseteq[n]$ such that $|M|=m$ and $\Gamma|_M\cong L_m.$ If all $S\subseteq[n]$, $|S|\leq m$, satisfy $\Gamma|_S\cong L_{|S|}$, the statment is true. Further, let $K$ be a subset of maximum cardinality $k$ of the nonempty set
$$\{S\subseteq[n]\,:\,|S|\leq m,\,\Gamma|_S\ncong L_{|S|}\}.$$
It is sufficient to prove that polytopes $Q_{\Gamma,m}$ and $Q_{\Gamma,m}-\Delta_K$ are normally equivalent. By the previous theorem, there exists $k'<k$ such that polytopes $Q_{\Gamma|_K,k}$ and $Q_{\Gamma|_K,k'}$ are normally equivalent. In particular, it means that 
$$Q_{\Gamma|_K,k}=Q_{\Gamma|_K,k-1}+\Delta_K
\qquad\text{and}\qquad 
Q_{\Gamma|_K,k-1}$$
are normally equivalent polytopes. Since $Q_{\Gamma|_K,k}$ is the Minkowski summand of $Q_{\Gamma,m}$, it implies that polytopes
$Q_{\Gamma,m}$ and $Q_{\Gamma,m}-\Delta_{K}$
are normally equivalent as well.
\end{proof}

\begin{corollary}
For a connected simple graph $\Gamma$ on $n$ vertices, the polytopes 
$$Q_{\Gamma}\;\;\;:=\sum_{\substack{S\subseteq[n],\\c(\Gamma|_S)=1}}\Delta_S
\qquad\text{and}\qquad
Q^L_{\Gamma}\;\;\;:=\sum_{\substack{S\subseteq[n],\\\Gamma|_S\cong L_{|S|}}}\Delta_S.$$
are normally equivalent.
\end{corollary}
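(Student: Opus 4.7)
The plan is to reduce the corollary to the immediately preceding theorem by choosing $m$ large enough that the cardinality constraint $|S|\leq m+1$ becomes vacuous. Since every subset of $[n]$ has cardinality at most $n$, taking $m=n-1$ makes the constraint $|S|\leq m+1=n$ automatic. Therefore the two defining sums coincide with those of $Q_{\Gamma,n-1}$ and $Q^L_{\Gamma,n-1}$ respectively.

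More precisely, I would first note that
$$Q_{\Gamma}=\sum_{\substack{S\subseteq[n]\\c(\Gamma|_S)=1}}\Delta_S=\sum_{\substack{S\subseteq[n],\,|S|\leq n\\c(\Gamma|_S)=1}}\Delta_S=Q_{\Gamma,n-1},$$
and analogously $Q^L_{\Gamma}=Q^L_{\Gamma,n-1}$, using that the constraint $\Gamma|_S\cong L_{|S|}$ already forces $\Gamma|_S$ to be connected (so the index set is well-defined in both formulations). Applying the preceding theorem with $m=n-1$ then yields that $Q_{\Gamma,n-1}$ and $Q^L_{\Gamma,n-1}$ have the same normal fan, which gives the claim.

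I expect no genuine obstacle here: the content of the corollary is entirely captured by the preceding theorem, and the only thing to verify is the trivial identification of $Q_{\Gamma}$ with $Q_{\Gamma,n-1}$ (and similarly for the $L$-version). The non-trivial work, namely the inductive removal of a simplex $\Delta_K$ indexed by a connected but non-path subset $K$ via the normal-equivalence criterion from Lemma~\ref{lema_1}, has already been carried out in the proof of the theorem.
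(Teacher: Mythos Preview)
Your proposal is correct and is exactly what the paper intends: the corollary is stated without proof, as an immediate specialization of the preceding theorem with $m=n-1$, so that the constraint $|S|\le m+1=n$ becomes vacuous and $Q_{\Gamma,n-1}=Q_\Gamma$, $Q^L_{\Gamma,n-1}=Q^L_\Gamma$.
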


\section{$\mathcal{H}-$posets}

For a given vertex $v$ of a $m-$graph polytope $Q_{\Gamma,m}$ there exists a poset $\mathsf{P}_v$ (described in \cite{PRW}, Corollary 3.9) whose linear extensions corresponding to the \emph{Weyl chambers} are contained in the normal cone of the vertex $v$. 
For example, if $\mathsf{P}_v:(1<2,3<2)$ then the normal cone of the vertex $v$ contains Weyl chambers determined by $x_1\leq x_3\leq x_2$ and $x_3\leq x_1\leq x_2.$

\medskip

We can regard a poset $\mathsf{P}$ 
as a directed graph where $j<_{\mathsf{P}}i$ 
if and only if there is a directed path from $i$ to $j$ in that directed graph. In the general case, vice versa does not hold. If a directed graph is \emph{acyclic}, i.e. there exist no vertices $v_1,v_2,\cdots,v_k$ such that 
$v_1\rightarrow v_2\rightarrow\cdots\rightarrow v_l\rightarrow v_1$, we can view this directed graph as a binary relation whose transitive closure defines a poset.

\medskip
 
For a given hypergraph $\mathsf{H}$ on the set $[n]$, an $1-$\emph{orientation of a hyperedge} $H\in\mathsf{H}$ is a par $(h_1,H_2)$, where $h_1$ is a distinguished element of $H\subseteq[n]$, and $H_2=H\setminus\{h_1\}$. An \emph{$1-$orientation $\mathcal{O}$ of the hypegraph} $\mathsf{H}$ is the set of $1-$orientations of all its hyperedges. Then we can construct an oriented multigraph $\mathsf{H}/\mathcal{O}$ on the set $[n]$ with $h_1\rightarrow h_2$ for all $h_2\in H_2$ satisfying $(h_1,H_2)\in\mathcal{O}$. Specially, if the oriented multigraph $\mathsf{H}/\mathcal{O}$ has no cycles we say that the $1-$orientation $\mathcal{O}$ is \emph{acyclic}. In that case $\mathsf{H}/\mathcal{O}$ is an oriented graph. 

\begin{example}
For the hypergraph $\mathsf{H}_2(L_4)=([4],\{12,23,34,123,234\})$ the $1$--orientation 
$$\mathcal{O}_1=\{(1,\{2\}), (2,\{3\}), (3,\{4\}),(1,\{2,3\}),(2,\{3,4\}\}$$ 
is acyclic. The $1$--orientation 
$$\mathcal{O}_2=\{(1,\{2\}), (2,\{3\}), (3,\{4\}),(3,\{1,2\}),(4,\{2,3\}\}$$ 
is not acyclic, since $\mathsf{H}/\mathcal{O}_2$ has cycle $2\to3\to4\to2$. 
\end{example}

Let $\mathcal{O}$ be an acyclic $1-$orientation of a hypegraph $\mathsf{H}$ on the set $[n]$. The transitive closure of the acyclic orientied graph $\mathsf{H}/\mathcal{O}$ is a poset $\mathsf{P}$ such that for all $H\in\mathsf{H}$ the restriction $\mathsf{P}|_H$ is a poset whose Hasse diagram is a \emph{rooted tree}. The root of this rooted tree is the first component of $1-$orientation $(h_1,H_2)$ of a hyperedge $H$ in $\mathsf{H}.$

\begin{example}
The $1$--orientation $\mathcal{O}_2$ from the pervious example is acyclic, so the transitive closure od the orientied graph $\mathsf{H}/\mathcal{O}_2$ is the poset $\mathsf{P}(2<1,3<1,4<1,3<2,4<2,4<3)$.
\end{example}

\begin{definition}\label{Hposet}\emph{
A poset $\mathsf{P}$ on the set $[n]$ is a \emph{$\mathcal{H}-$poset} of a hypergraph $\mathsf{H}$ if
\begin{enumerate}
\item for all $H\in\mathsf{H}$ the Hasse diagram of the restriction $\mathsf{P}|_H$ is a rooted tree, 
\item $i\lessdot_\mathsf{P}j$ if and only if there exists $H\in\mathsf{H}$ such that $\{i,j\}\subseteq H$, where $i\lessdot_\mathsf{P}j$ means that there is no $k\in\mathsf{P}$ such that $i<_\mathsf{P}k<_\mathsf{P}j.$
\end{enumerate}}
\end{definition}

Note that $\mathcal{B}-$trees defined in \cite{P} satisfy the requirements of the previous definition, so $\mathcal{B}-$trees are the special kind of $\mathcal{H}-$posets.

\begin{example}
Let $L_4$ be the line graph on the set $[4]$  with edges $\{12,23,34\}$. For the hypergraph $\mathsf{H}_1(L_4)$ there are eight  $\mathcal{H}-$posets: 
\[\mathsf{P}_{1},\mathsf{P}_{2},\mathsf{P}_{3},\mathsf{P}_{4},\mathsf{P}_{5},\mathsf{P}_{6},\mathsf{P}_{7},\mathsf{P}_{8}.\]
Also, there are twelve $\mathcal{H}-$posets for the hypergraph $\mathsf{H}_2(L_4)$:
\[\mathsf{P}_{1},\mathsf{P}_{2},\mathsf{P}_{3},\mathsf{P}_{4},\mathsf{P}_{9},\mathsf{P}_{10},\mathsf{P}_{11},\mathsf{P}_{12},\mathsf{P}_{13},\mathsf{P}_{14},\mathsf{P}_{15},\mathsf{P}_{16}.\]
Finally, for the hypergraph $\mathsf{H}_3(L_4)$ we have fourteen $\mathcal{H}-$posets:
\[\mathsf{P}_{1},\mathsf{P}_{2},\mathsf{P}_{3},\mathsf{P}_{4},\mathsf{P}_{9},\mathsf{P}_{10},\mathsf{P}_{11},\mathsf{P}_{12},\mathsf{P}_{13},\mathsf{P}_{14},\mathsf{P}_{17},\mathsf{P}_{18},\mathsf{P}_{19},\mathsf{P}_{20}.\]

\vspace*{-3mm}
\begin{figure}[h!]
\centering
\includegraphics[width=0.9\textwidth]{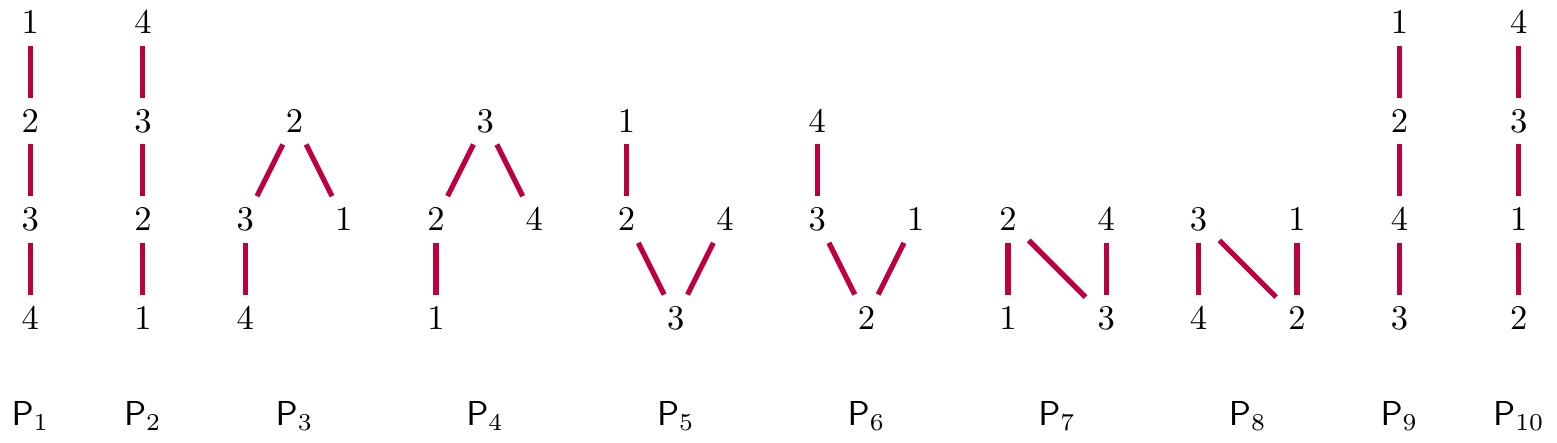}
\includegraphics[width=0.9\textwidth]{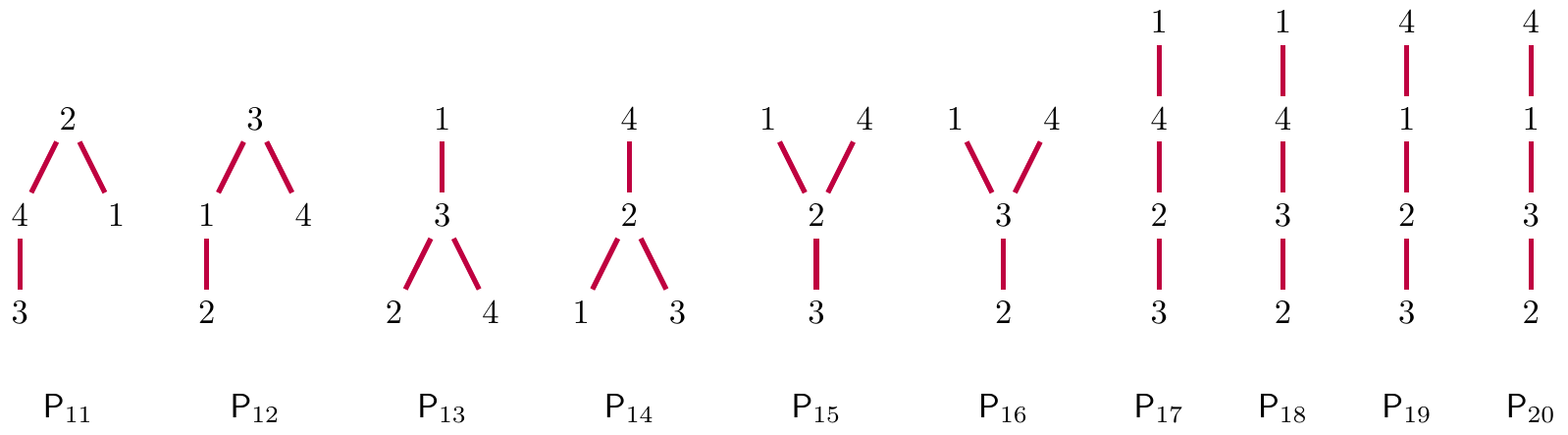}
\caption{The Hasse diagrams of $\mathcal{H}-$posets for $\mathsf{H}_1(L_4), \mathsf{H}_2(L_4)$ and $\mathsf{H}_3(L_4)$}
\label{fig:slika1}
\end{figure}
\end{example}

\begin{proposition}
A poset $\mathsf{P}$ is a $\mathcal{H}-$poset of a hypergraph $\mathsf{H}$ if and only if there exists an acyclic $1-$orientation $\mathcal{O}$ of $\mathsf{H}$ such that $\mathsf{P}$ is the transitive closure of $\mathsf{H}/\mathcal{O}$.
\end{proposition}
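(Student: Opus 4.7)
The plan is to exhibit a two-sided correspondence in which the distinguished vertex of the orientation of a hyperedge $H$ is exactly the root of the rooted tree Hasse diagram of $\mathsf{P}|_H$. Once this correspondence is set up, I would verify each direction separately.

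For the $(\Leftarrow)$ direction, given an acyclic $1$-orientation $\mathcal{O}$, I would let $\mathsf{P}$ be the transitive closure of $\mathsf{H}/\mathcal{O}$. Condition $(1)$ of Definition \ref{Hposet} is already supplied by the paragraph preceding the proposition: the Hasse diagram of $\mathsf{P}|_H$ is a rooted tree whose root is the distinguished element of the orientation of $H$. For condition $(2)$, I would argue that if $i\lessdot_\mathsf{P} j$, then any directed path from $j$ to $i$ in $\mathsf{H}/\mathcal{O}$ must in fact be a single arrow, since an intermediate vertex $k$ would yield $i<_\mathsf{P} k<_\mathsf{P} j$ and violate the cover; that single arrow is induced by some $(j,H\setminus\{j\})\in\mathcal{O}$ with $i\in H$, so $\{i,j\}\subseteq H$. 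Conversely, if $\{i,j\}\subseteq H$, the rooted-tree structure of $\mathsf{P}|_H$ forces $i$ and $j$ to lie on a common ancestor chain, and combining this with the definition of $\lessdot_\mathsf{P}$ as ``no intermediate element'' delivers one of $i\lessdot_\mathsf{P} j$ or $j\lessdot_\mathsf{P} i$.

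For the $(\Rightarrow)$ direction, I would start from an $\mathcal{H}$-poset $\mathsf{P}$ and define $\mathcal{O}$ by assigning to each $H\in\mathsf{H}$ the pair $(r(H), H\setminus\{r(H)\})$, where $r(H)$ is the unique root of the rooted tree $\mathsf{P}|_H$. Acyclicity is immediate: every arrow $r(H)\to h$ satisfies $h<_\mathsf{P} r(H)$ because $r(H)$ is the maximum of $\mathsf{P}|_H$, so a directed cycle in $\mathsf{H}/\mathcal{O}$ would give a cycle in the strict order $<_\mathsf{P}$. It then remains to check that the transitive closure $\mathsf{P}'$ of $\mathsf{H}/\mathcal{O}$ coincides with $\mathsf{P}$; the inclusion $\mathsf{P}'\subseteq\mathsf{P}$ is clear from the same observation about arrows, and the reverse inclusion is obtained by writing any $i<_\mathsf{P} j$ as a maximal chain of $\lessdot_\mathsf{P}$-covers and realizing each cover as an arrow in $\mathsf{H}/\mathcal{O}$ using condition $(2)$ to locate a suitable hyperedge.

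The main obstacle I anticipate is exactly the reverse inclusion in the $(\Rightarrow)$ direction: for a cover $v\lessdot_\mathsf{P} w$ with a witnessing hyperedge $H\ni v,w$, the arrow $w\to v$ is present in $\mathsf{H}/\mathcal{O}$ only when $w=r(H)$, so one must produce, for every cover, some hyperedge containing both $v$ and $w$ whose root (in $\mathsf{P}$-restriction) is precisely $w$. This is the step where condition $(2)$ of Definition \ref{Hposet} must be exploited in full, together with the fact that covers in $\mathsf{P}$ restrict to covers in $\mathsf{P}|_H$ and that the rooted tree on $H$ has a unique maximum; all other parts of the argument are essentially bookkeeping.
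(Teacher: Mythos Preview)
Your overall strategy coincides with the paper's: in the $(\Rightarrow)$ direction you build $\mathcal{O}$ by taking the root of each $\mathsf{P}|_H$ as the distinguished vertex and then argue acyclicity by observing that every arrow points strictly downward in $\mathsf{P}$; in the $(\Leftarrow)$ direction you invoke the paragraph before the proposition for condition~(1) and check condition~(2) directly. This is exactly what the paper does, only the paper is terser: its $(\Rightarrow)$ argument stops after acyclicity and never verifies that the transitive closure of $\mathsf{H}/\mathcal{O}$ equals $\mathsf{P}$, and its $(\Leftarrow)$ argument is a single sentence asserting that the definition is satisfied. So the ``main obstacle'' you isolate---finding, for each cover $v\lessdot_{\mathsf{P}}w$, a hyperedge whose root is $w$---is not addressed by the paper either; you have spotted a genuine gap that the published proof leaves open rather than missed an idea the paper supplies.

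One point in your sketch does fail, though. In the $(\Leftarrow)$ direction you claim that if $\{i,j\}\subseteq H$ then the rooted-tree structure of $\mathsf{P}|_H$ forces $i$ and $j$ onto a common ancestor chain, and hence one covers the other. Neither step is correct: two vertices of a rooted tree can sit in different branches (siblings, or cousins) and hence be incomparable in $\mathsf{P}|_H$; and even when they are comparable, they need not be adjacent in the Hasse diagram. For instance, with $\mathsf{H}=\{\{1,2,3\}\}$ and $\mathcal{O}=\{(3,\{1,2\})\}$ the transitive closure has $1$ and $2$ incomparable although $\{1,2\}\subseteq H$. This shows that the ``if'' half of condition~(2), read literally, is not delivered by the construction; the argument you need is only the ``only if'' half (covers are witnessed by hyperedges), which is exactly the single-arrow argument you already gave.
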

\begin{proof}
If $\mathsf{P}$ is a $\mathcal{H}-$poset then for every hyperedge $H\in\mathsf{H}$ there is $h_1\in H$ such that $\mathsf{P}|_H$ is the rooted tree with the root $h_1$. Note that $(h_1,H\setminus\{h_1\})$ is an $1-$orientation of a hyperedge $H$ and that the set of all $1-$orientations of hyperedges forms an acyclic $1-$orientation $\mathcal{O}$ of the hypergraph $\mathsf{H}$, for if $\mathcal{O}$ is not acyclic $1-$orientation, there exists a path $v_1\rightarrow v_2\rightarrow \cdots\rightarrow v_k\rightarrow v_{k+1}=v_1$ in the directed graph $\mathsf{H}/\mathcal{O}$. Then, we have hyperedges $H_1,H_2,\ldots,H_k$ with $1-$orientations $(v_1,H_{1}\setminus\{v_1\}),(v_2,H_{2}\setminus\{v_2\}),\ldots,(v_k,H_{k}\setminus\{v_k\})$ where $v_{i+1}\in H_{i}\setminus\{v_i\}$ for $1\leq i\leq k.$ Since $1-$orientations of hyperedges arise from the poset $\mathsf{P}$, it holds $v_1>_\mathsf{P}v_2>_\mathsf{P}\cdots>_\mathsf{P}v_k>_\mathsf{P}v_1$, the contradiction.
On the other hand, if $\mathcal{O}$ is an acyclic $1-$orientation of $\mathsf{H}$, the transitive closure of $\mathsf{H}/\mathcal{O}$ satisfies the requirements of the definition $\ref{Hposet}$, so $\mathsf{H}/\mathcal{O}$ is a $\mathcal{H}-$poset.
\end{proof}

Let $f:[n]\rightarrow\mathbb{N}$ be a function on the set of vertices of a poset $\mathsf{P}$ on $[n]$. We say that the function $f$ is a \emph{natural $\mathsf{P}-$partition} if 
$f(i)\leq f(j)$ for $v_i\leq_\mathsf{P}v_j$ and a \emph{strict $\mathsf{P}-$partition} if, additionaly, $f(i)<f(j)$ for $i<_\mathsf{P}j$. Denote by $\mathcal{A}(\mathsf{P})$ the set of all natural $\mathsf{P}$-partitions and by $\mathcal{A}_0(\mathsf{P})$ the set of strict $\mathsf{P}-$partitions. Let $F(\mathsf{P})$ be the \emph{quasisymmetric enumerator of strict $\mathsf{P}-$partitions} defined by
\[F(\mathsf{P})=\sum_{f\in\mathcal{A}_0(\mathsf{P})}x_{f(1)}x_{f(2)}\cdots x_{f(n)}.\]

\begin{proposition}
For a simple connected graph $\Gamma$ on the set $[n]$ and $m\geq1$ it holds
\begin{align}\label{poset}
\Psi^m_0(\Gamma)\;\;\;=
\sum_{\mathsf{P}\in\mathcal{H}(\mathsf{H}_m(\Gamma))}F(\mathsf{P}),
\end{align}
where $F$ is the quasisymmetric enumerator of strict $\mathsf{P}-$partitions.
\end{proposition}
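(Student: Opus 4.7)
The plan is to compare monomial expansions on both sides of~(\ref{poset}). Setting $q=0$ in formula~(\ref{for2}) keeps only flags $\mathcal{F}:\emptyset=F_0\subset\cdots\subset F_k=[n]$ with $\mathsf{rk}_m(\Gamma^\bold{1}/\mathcal{F})=0$, i.e., those for which every $\mathrm{pr}_m(\Gamma^\bold{1}|_{F_i}/F_{i-1})$ is edgeless. By the analysis in the proof of the Theorem relating $F_q(Q_{\Gamma,m})$ to $\Psi_q^m$, this edgelessness is equivalent to the condition that no hyperedge $H\in\mathsf{H}_m(\Gamma)$ with $H\subseteq F_i$ has two elements in $F_i\setminus F_{i-1}$; I will call such a flag \emph{edge-free}, so that $\Psi_0^m(\Gamma^\bold{1})=\sum_{\mathcal{F}\text{ edge-free}}M_{\mathsf{type}(\mathcal{F})}$. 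On the other hand, grouping each strict $\mathsf{P}$-partition $f\in\mathcal{A}_0(\mathsf{P})$ by its sequence of level sets yields $F(\mathsf{P})=\sum_\mathcal{F}[f_\mathcal{F}\in\mathcal{A}_0(\mathsf{P})]\,M_{\mathsf{type}(\mathcal{F})}$, where $f_\mathcal{F}(v):=j$ for $v\in F_j\setminus F_{j-1}$. The proposition therefore reduces to showing that, for every flag $\mathcal{F}$, the number of $\mathcal{H}$-posets $\mathsf{P}$ with $f_\mathcal{F}\in\mathcal{A}_0(\mathsf{P})$ equals $1$ when $\mathcal{F}$ is edge-free and $0$ otherwise.

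For an edge-free flag I would construct a canonical $\mathcal{H}$-poset $\mathsf{P}(\mathcal{F})$ as follows. For each hyperedge $H\in\mathsf{H}_m(\Gamma)$, edge-freeness applied to the smallest block of $\mathcal{F}$ containing $H$ forces $f_\mathcal{F}|_H$ to attain its maximum at a unique vertex $r_H$. The collection $\mathcal{O}:=\{(r_H,H\setminus\{r_H\}):H\in\mathsf{H}_m(\Gamma)\}$ is then a $1$-orientation, and it is acyclic because $f_\mathcal{F}$ strictly decreases along every arc of $\mathsf{H}_m(\Gamma)/\mathcal{O}$, so no directed cycle can close. By the preceding Proposition, the transitive closure $\mathsf{P}(\mathcal{F})$ of $\mathsf{H}_m(\Gamma)/\mathcal{O}$ is an $\mathcal{H}$-poset, and the same strict-decrease property along directed paths shows that $f_\mathcal{F}$ is a strict $\mathsf{P}(\mathcal{F})$-partition.

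For uniqueness, I would argue that any $\mathcal{H}$-poset $\mathsf{P}'$ admitting $f_\mathcal{F}$ as a strict partition must coincide with $\mathsf{P}(\mathcal{F})$: the root of each rooted tree $\mathsf{P}'|_H$ is its unique $\leq_{\mathsf{P}'}$-maximum $r'_H$, and strictness forces $f_\mathcal{F}(r'_H)>f_\mathcal{F}(h)$ for all $h\in H\setminus\{r'_H\}$, so $r'_H$ must equal the $f_\mathcal{F}$-maximum $r_H$. The $1$-orientations agree, hence so do the transitive closures, giving $\mathsf{P}'=\mathsf{P}(\mathcal{F})$. When $\mathcal{F}$ is not edge-free, some hyperedge $H$ carries two elements sharing an $f_\mathcal{F}$-value, which is incompatible with any rooted-tree structure on $\mathsf{P}'|_H$ under strictness, so no admissible $\mathsf{P}'$ exists. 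I expect the main delicate point to be exactly this uniqueness step, namely translating the abstract rooted-tree condition of Definition~\ref{Hposet} into the assertion that the root of every $\mathsf{P}|_H$ is forced to be the $f_\mathcal{F}$-maximum; once this is in place, the two monomial expansions match term by term and~(\ref{poset}) follows.
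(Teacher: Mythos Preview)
Your argument is correct and takes a genuinely different route from the paper. The paper's proof is geometric: it invokes Theorem~12 of~\cite{BBM}, which identifies the vertices of $Q_{\Gamma,m}$ with acyclic $1$-orientations of $\mathsf{H}_m(\Gamma)$ and describes each vertex cone as $\{x_i\geq x_j: i\to j\}$; since $\Psi_0^m$ enumerates lattice points in the open vertex cones, the identity follows immediately. Your proof, by contrast, is purely combinatorial: you match the monomial expansions of the two sides directly, showing that each edge-free flag $\mathcal{F}$ is compatible with exactly one $\mathcal{H}$-poset (namely the transitive closure of the $1$-orientation that picks the $f_\mathcal{F}$-maximum of every hyperedge). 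This avoids the external polytope result entirely and is self-contained once the preceding Proposition (the bijection between $\mathcal{H}$-posets and acyclic $1$-orientations) is in hand. The paper's approach is shorter and more conceptual; yours is more elementary and makes the counting explicit.

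One small imprecision worth tightening: in the non-edge-free case you write that $H$ ``carries two elements sharing an $f_\mathcal{F}$-value, which is incompatible with any rooted-tree structure.'' Two incomparable siblings in a rooted tree \emph{can} share a value under a strict partition; what actually breaks is that, by your definition of edge-free, the two tied elements lie in $F_i\setminus F_{i-1}$ with $H\subseteq F_i$, so they share the \emph{maximum} value of $f_\mathcal{F}$ on $H$. Since the root of $\mathsf{P}'|_H$ must have strictly larger $f_\mathcal{F}$-value than every other element of $H$, the maximum cannot be attained twice. With that one word added, the argument is complete.
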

\begin{proof}
In \cite{BBM}, Theorem 12, is shown that vertices of an $m-$graph polytope $Q_{\Gamma,m}$ are naturally labeled by acyclic $1-$orientations of the hypergraph $\mathsf{H}_m(\Gamma)$, i.e. that the cone $C_\mathcal{O}$ defined by $x_i\geq x_j$ for $x_i\rightarrow x_j$ in $\mathsf{H}_m(\Gamma)/\mathcal{O}$ is the cone of some vertex in the hypergraphical polytope $Q_{\Gamma,m}.$ If $x_i\geq x_j$ and $x_j\geq x_k$ then $x_i\geq x_k$, so the $\mathcal{H}-$poset $\mathsf{P}$ which is the transitive closure of $\mathsf{H}_m(\Gamma)/\mathcal{O}$ induces the same cone $C_\mathcal{O}.$ Since $\Psi^m_0(\Gamma)$ counts points in the normal cones of vertices of the $m-$graph polytope $Q_{\Gamma,m}$, the equation $(\ref{poset})$ is true.
\end{proof}

In the end we will describe $\mathcal{H}-$posets corresponding to the sequence of polytopes $(Q_{\Gamma,1},Q_{\Gamma,2},\ldots,Q_{\Gamma,n-1})$ associated to a simple graph $\Gamma$. The following theorem shows that from $\mathcal{H}-$posets of $\mathsf{H}_k(\Gamma)$ we can obtain $\mathcal{H}-$posets of the hypergraph $\mathsf{H}_{k+1}(\Gamma).$ Note thar $\mathcal{H}-$posets of $\mathsf{H}_1(\Gamma)$ are transitive closure of acyclic orientations of graph $\Gamma$, and $\mathcal{H}-$posets of $\mathsf{H}_{n-1}(\Gamma)$ are $\mathcal{B}-$trees.

\begin{theorem}
Let $\mathsf{P}$ be a $\mathcal{H}-$poset of the hypergraph $\mathsf{H}_k(\Gamma)$ with the property that $\mathsf{P}$ is not a $\mathcal{H}-$poset of $\mathsf{H}_{k+1}(\Gamma)$. Then there exists an algorithm that creates $\mathcal{H}-$posets of $\mathsf{H}_{k+1}(\Gamma)$ by adding some relation in the $\mathcal{H}-$poset $\mathsf{P}$.
\end{theorem}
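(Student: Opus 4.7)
The approach is to make the algorithm explicit via the correspondence between $\mathcal{H}$-posets and acyclic $1$-orientations given in the preceding proposition. Since $\mathsf{H}_{k+1}(\Gamma)$ differs from $\mathsf{H}_k(\Gamma)$ only by the addition of connected subsets of size $k+2$, producing a $\mathcal{H}$-poset of $\mathsf{H}_{k+1}(\Gamma)$ from $\mathsf{P}$ amounts to extending the underlying acyclic $1$-orientation $\mathcal{O}$ by choosing a root for each new size-$(k+2)$ hyperedge, and then taking the transitive closure of the enlarged orientation.

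First, I would localize the obstruction. By hypothesis there exists some $H\in\mathsf{H}_{k+1}(\Gamma)\setminus\mathsf{H}_k(\Gamma)$ such that $\mathsf{P}|_H$ violates condition (1) or (2) of Definition~\ref{Hposet}. Since $\mathsf{P}$ is already a $\mathcal{H}$-poset of $\mathsf{H}_k(\Gamma)$, the restriction $\mathsf{P}|_{H'}$ is a rooted tree for every $H'\in\mathsf{H}_k(\Gamma)$ with $H'\subseteq H$, so the defect is confined to the top layer of $H$. The algorithmic fix is a single local move: pick a vertex $h^*\in H$ that is maximal in $\mathsf{P}|_H$ and consistent with the $1$-orientations of all connected proper subsets of $H$, and introduce the cover relations $v\lessdot_{\mathsf{P}'}h^*$ for each remaining maximal element $v$ of $\mathsf{P}|_H$. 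This declares $h^*$ the root of $H$ in the extension $\mathcal{O}'$. Iterating over all new hyperedges, and branching across admissible choices of $h^*$, produces the family of $\mathcal{H}$-posets of $\mathsf{H}_{k+1}(\Gamma)$ obtainable from $\mathsf{P}$ by adding relations.

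The main obstacle is verifying that condition (1) for older hyperedges $H'\in\mathsf{H}_k(\Gamma)$ is preserved. A newly added cover $v\lessdot h^*$ lives inside a size-$(k+2)$ hyperedge $H$, but could in principle create a diamond in $\mathsf{P}'|_{H'}$ if $H'\cap H$ is large enough for the fresh root $h^*$ to collide with the root of $H'$ already fixed by $\mathcal{O}$. Ruling this out requires a case analysis on the overlap $H'\cap H$ and the relative position of the two roots, using connectedness of $\Gamma|_{H'}$ and $\Gamma|_H$; this is the technical heart of the proof. Once this is settled, condition (2) for new pairs in $H$ holds by construction, and acyclicity of $\mathcal{O}'$ follows because each new arc is directed downward in the pre-existing partial order $\mathsf{P}$, so no new cycle can be introduced.
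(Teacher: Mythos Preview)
Your plan is in the same spirit as the paper's—iteratively repair the new hyperedges by adding order relations and passing to the transitive closure—but the local move you propose is different. The paper does not designate a single root $h^*$ and attach all remaining maximal elements beneath it in one stroke. Instead it selects \emph{two} maximal elements $v_1,v_2$ of $\mathsf{P}|_{H_1}$ (these exist because the proper connected subsets of $H_1$ already restrict to rooted trees), branches on the two relations $v_1<v_2$ and $v_2<v_1$, and repeats the procedure until every new hyperedge of $\mathsf{H}_{k+1}(\Gamma)$ restricts to a rooted tree. Termination is argued purely by finiteness of the set of new hyperedges. Notably, the paper does \emph{not} carry out the case analysis you call ``the technical heart'': it never verifies that the added relation preserves condition~(1) for the old hyperedges of $\mathsf{H}_k(\Gamma)$, nor condition~(2). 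So the concern you single out is genuine, but the paper simply does not address it; in that sense your outline is more scrupulous than the published argument, though equally incomplete on this point.

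Two places where your version is weaker than the paper's. First, ``maximal in $\mathsf{P}|_H$ and consistent with the $1$-orientations of all connected proper subsets of $H$'' is left imprecise, and it is not clear such an $h^*$ always exists; the paper's binary move on a pair of maximals sidesteps this. Second, your acyclicity justification (``each new arc is directed downward in the pre-existing partial order $\mathsf{P}$'') is off: the elements $v$ and $h^*$ are incomparable in $\mathsf{P}$, so the new arc is not ``downward'' in any pre-existing sense. Acyclicity holds for the more elementary reason that adding a single relation between two incomparable elements cannot close a directed cycle.
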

\begin{proof}
Let $\mathsf{P}$ be a $\mathcal{H}-$poset of $\mathsf{H}_k(\Gamma)$. It follows that for all subsets $H\in\mathsf{H}_k(\Gamma)$ the restriction $\mathsf{P}|_{H}$ is a rooted tree. As $\mathsf{P}$ is not a $\mathcal{H}-$poset of $\mathsf{H}_{k+1}(\Gamma)$, we can find $H_1\in\mathsf{H}_{k+1}(\Gamma)$ such that $|H_1|=k+1$ and $\mathsf{P}|_{H_1}$ is not a rooted tree. Since there exist at least two subsets of $H_1$ of the cardinality $k$ which belong to $\mathsf{H}_k(\Gamma)$, there are $v_1,v_2\in H_1$ for which there is no element $v_3\in H_1\setminus\{v_1,v_2\}$ satisfying either $v_1<_{\mathsf{P}|_{H_1}}v_3$ or $v_2<_{\mathsf{P}|_{H_1}}v_3$. By adding, respectivelly, the relations $v_1<v_2$ and $v_2<v_1$ to the poset $\mathsf{P}$ we obtain new binary relations on the set $[n]$ whose transitive closures are new posets $\mathsf{P}_1$ are $\mathsf{P}_2$ such that $\mathsf{P}_1|_{H_1}$ and $\mathsf{P}_2|_{H_1}$ are rooted trees. If $\mathsf{P}_1$ and $\mathsf{P}_2$ are $\mathcal{H}-$posets of the hypergraph $\mathsf{H}_{k+1}(\Gamma)$ we will stop this procedure. Otherwise, we will continue in the same way, by taking a new hyperedge $H_2\in\mathsf{H}_{k+1}(\Gamma)$ of the cardinality $k+1$ such that $\mathsf{P}_1|_{H_2}$ or $\mathsf{P}_2|_{H_2}$ is not a rooted tree and by repeating the previously described procedure. This process will end, since there is a finite number of subsets of the cardinality $k+1$ in the hypergraph $\mathsf{H}_{k+1}(\Gamma).$
\end{proof}

\begin{example}
Consider posets $\mathsf{P}_6$, $\mathsf{P}_{10}$, $\mathsf{P}_{16}$, $\mathsf{P}_{18}$ and $\mathsf{P}_{20}$ from the previous example and recall that $\mathsf{P}_6$ is the $\mathcal{H}$--poset for $\mathsf{H}_1(L_4)$, $\mathsf{P}_{10}$ and $\mathsf{P}_{16}$ are the $\mathcal{H}-$posets for $\mathsf{H}_2(L_4)$, $\mathsf{P}_{18}$ and $\mathsf{P}_{20}$ for $\mathsf{H}_3(L_4)$.
From the $\mathcal{H}-$poset $\mathsf{P}_{6}$ we can obtain posets $\mathsf{P}_{10}$ and $\mathsf{P}_{16}$ by adding respectivelly relations $1<3$ and $3<1$ in $\mathsf{P}_{6}$ since $\mathsf{P}_6|_{\{1,2,3\}}$ is not a rooted tree. Simillary, from $\mathsf{P}_{16}$ we can obtain $\mathcal{H}-$posets $\mathsf{P}_{18}$ and $\mathsf{P}_{20}$ by adding respectivelly relations $4<1$ and $1<4$ since $\mathsf{P}_{16}|_{\{1,2,3,4\}}$ is not a rooted tree.
\end{example}


\end{document}